% SIAM Article Template
\documentclass[onefignum,onetabnum]{siamart190516}
\pdfoutput=1
% Information that is shared between the article and the supplement
% (title and author information, macros, packages, etc.) goes into
% ex_shared.tex. If there is no supplement, this file can be included
% directly.

% SIAM Shared Information Template
% This is information that is shared between the main document and any
% supplement. If no supplement is required, then this information can
% be included directly in the main document.

% Packages and macros go here
\usepackage{lipsum}
\usepackage{bm}
\usepackage{amsfonts}
\usepackage{graphicx}
\usepackage[caption=false,farskip=0pt]{subfig}
\usepackage{epstopdf}
\usepackage{algorithmic}

\ifpdf
  \DeclareGraphicsExtensions{.eps,.pdf,.png,.jpg}
\else
  \DeclareGraphicsExtensions{.eps}
\fi

% Add a serial/Oxford comma by default.

% Used for creating new theorem and remark environments
\newsiamremark{remark}{Remark}
\newsiamremark{hypothesis}{Hypothesis}
\crefname{hypothesis}{Hypothesis}{Hypotheses}
\newsiamthm{claim}{Claim}

% Sets running headers as well as PDF title and authors
\headers{potential, barycentric weights, and Lebesgue constants}{Kelong Zhao, and Shuhuang Xiang}

% Title. If the supplement option is on, then "Supplementary Material"
% is automatically inserted before the title.
\title{Polynomial and rational interpolation: potential, barycentric weights, and Lebesgue constants\thanks{Submitted to the editors DATE.
\funding{This work was funded by National Science Foundation of China (No. 12271528).}}}

% Authors: full names plus addresses.
\author{Kelong Zhao\thanks{School of Mathematics and Statistics, Central South University, Changsha 410083, Hunan, People's Republic of China 
  (\email{clonezhao.1994@csu.edu.cn}).}
\and Shuhuang Xiang\thanks{School of Mathematics and Statistics, Central South University, Changsha 410083, Hunan, People's Republic of China 
  (\email{xiangsh@csu.edu.cn}) (corresponding author).}}

\usepackage{amsopn}

%%% Local Variables: 
%%% mode:latex
%%% TeX-master: "ex_article"
%%% End: 

% Optional PDF information
\ifpdf
\hypersetup{
  pdftitle={polynomial and rational interpolation: potential, barycentric weights, and Lebesgue constants},
  pdfauthor={Kelong Zhao, and Shuhuang Xiang}
}
\fi

% The next statement enables references to information in the
% supplement. See the xr-hyperref package for details.

%\externaldocument{ex_supplement}

% FundRef data to be entered by SIAM
%<funding-group specific-use="FundRef">
%<award-group>
%<funding-source>
%<named-content content-type="funder-name"> 
%</named-content> 
%<named-content content-type="funder-identifier"> 
%</named-content>
%</funding-source>
%<award-id> </award-id>
%</award-group>
%</funding-group>

\begin{document}

\maketitle

% REQUIRED
\begin{abstract}
  In this paper, we focus on barycentric weights and Lebesgue constants for Lagrange interpolation of arbitrary node distributions on \([-1,1]\). The following three main works are included: estimates of upper and lower bounds on the barycentric weights are given in terms of the logarithmic potential function; for interpolation of non-equilibrium potentials, lower bounds with exponentially growing parts of Lebesgue constants are given; and for interpolation consistent with equilibrium potentials, non-exponentially growing upper bounds on their Lebesgue constants are given. Based on the work in this paper, we can discuss the behavior of the Lebesgue constant and the existence of exponential convergence in a unified manner in the framework of potential theory.
\end{abstract}

% REQUIRED
\begin{keywords}
  Barycentric interpolation, Lebesgue constant, potential theory
\end{keywords}

% REQUIRED
\begin{AMS}
  65D05, 41A10, 41A20
\end{AMS}

\section{Introduction} \label{sec:int}
For a linear interpolant of $f(x)\in[-1,1]$ 
\begin{equation*}
  I_f^{(n)}(x)=\sum_{k=0}^{n} f_k^{(n)}l_k^{(n)}(x),\quad f_k^{(n)}=f(x_k^{(n)}),\quad -1\le x_0^{(n)}<x_1^{(n)}<\dots<x_n^{(n)}\le1,
\end{equation*}
its Lebesgue constant is defined as 
\begin{equation*}
  \Lambda_n = \max_{x\in [-1,1]} \Lambda_n(x),\quad \Lambda_n(x)=\sum\limits_{k=0}^{n}|l_k^{(n)}(x)|.
\end{equation*}

This constant can be used to measure the amplification of approximation errors due to rounding. 
If the Lebesgue constant grows too rapidly, the method fails to achieve high-order and high-precision approximations. 
Therefore, alongside the development of interpolation theory, the study of Lebesgue constant has a long history 
and remains an important topic, especially for the investigation of Lebesgue constant in specific polynomial 
interpolation \cite{Szego1939,Erdos1961,Brutman1978,Brutman1997} and rational interpolation \cite{Bos2012,Bos2013}.

In these studies, the estimation of Lebesgue constant for interpolation methods on equidistant nodes has long attracted interest \cite{Trefethen1991,Bos2011,Bos2012}. 
One important reason for this is that, in reality, the data and samples will often not be on Chebyshev points. Instead, there is a high probability that they will be on equidistant nodes. 
However, reality is diverse, and it is possible that the data and samples were obtained on other non-isometric nodes. Thus, we can ask a more general question: 
How would the Lebesgue constants behave for polynomials and rational interpolations on arbitrary nodes?

Since Lebesgue constant varies with respect to $n$, these `arbitrary' interpolation nodes cannot become irregular as $n$ tends to infinity; otherwise, there is no point in discussing their Lebesgue constants. 
Therefore, it is reasonable to impose a restriction that the unit measure $\mu_n= 1/(n+1)\sum_{i=0}^{n}\delta_{x_i^{(n)}}$ converges weakly to a unit measure $\mu$, i.e., as given in 
\begin{equation}\label{eq:1.2}
  \lim_{n\to\infty}\int_{-1}^1 g(t) \, \mathrm{d}\mu_{n}(t)=\int_{-1}^1 g(t)w(t) \, \mathrm{d}t
\end{equation}
for arbitrary continuous function $g$ on $[-1,1]$, 
where $w$ is the density function of $\mu$. Then, the `arbitrary' nodes are transformed into nodes with an `arbitrary' density function $w$.

We can define a logarithmic potential function 
\begin{equation}\label{eq:1.3}
  U(x)=\int_{-1}^{1}\log\frac{1}{|x-t|}w(t)\, \mathrm{d}t+\phi(x)
\end{equation}
where $\phi$ is an external field. Similarly one can define a discrete potential function for a rational interpolant $r_{n,m}$ at $\{x_i^{(n)}\}_{i=0}^n$
\begin{equation}\label{eq:1.4}
  U_{n}(x)=\frac{1}{n+1}\sum_{i=0}^{n}\log\frac{1}{|x-x_{i}^{(n)}|}+\phi_n(x),\quad \phi_n(x)=\frac{1}{n+1}\sum_{j=1}^{m}\log{|x-p_{j}^{(n)}|}.
\end{equation}
where $\{p_j^{(n)}\}_{j=1}^m$ are all poles of $r_{n,m}$. When the external field is $0$, it corresponds to polynomial interpolation.

If the discrete potential \cref{eq:1.4} tends to the steady-state potential function \cref{eq:1.3} as $n$ grows, then a clear pattern emerges in 
the rate of growth of the Lebesgue constant concerning the \cref{eq:1.3}. We illustrate this with a simple test.
\begin{itemize}
  \item The first test involves polynomial interpolation on equidistant nodes, corresponding to a density function of $w(t) = 1/2$ and a potential function given by 
  \begin{equation*}
    U_a(x)=1-\frac{1}{2}{((x+1)\log(x+1)-(x-1)\log(x-1))}.
  \end{equation*}
  \item The second test pertains to rational interpolation $r_{n,n}$ using the Chebyshev points. In this case, the corresponding density function is $w(t) = 1/(\pi\sqrt{1-x^2})$, 
  the $n$ poles are located at $\pm0.5\mathrm{i}$, and the potential function is given by 
  \begin{equation*}
    U_b(x)=\log{2}+\phi(x),\quad \phi(x)=\frac{1}{2}\log|x-0.5\mathrm{i}|+\frac{1}{2}\log|x+0.5\mathrm{i}|.
  \end{equation*}
  \item The third test deals with rational interpolation $r_{n,n}$ on equidistant nodes. 
  The corresponding density function is $w(t) = 1/2$, the $n$ poles are located at $\pm0.5\mathrm{i}$, 
  and the potential function is given by 
  \begin{equation*}
    U_c(x)=1-\frac{1}{2}{((x+1)\log(x+1)-(x-1)\log(x-1))}+\phi(x).
  \end{equation*}
\end{itemize}

Since the nodes $\{x_i^{(n)}\}_{i=0}^n$ and poles $\{p_j^{(n)}\}_{j=1}^m$ of these interpolations are known, we use barycentric interpolation \cite{Berrut1997} 
\begin{equation} \label{eq:1.1c}
  r_{n,m}(x)=\sum\limits_{k=0}^{n}\dfrac{w_kf_k}{x-x_k}\Big/{\sum\limits_{k=0}^{n}\dfrac{w_k}{x-x_k}},
  \,\,  w_k = C \dfrac{\prod_{j=1}^m(x_k-p_j)}{\prod_{i=0,i\ne k}^n(x_k-x_i)},\,\, C\not=0.
\end{equation}
for this purpose. Similar to Lebesgue constant, the quotient of the largest barycentric weight by the smallest in absolute values, 
as given in $\max|w_k|/\min|w_k|$, gives an intuitive estimation on the quality of the interpolation method. 
Based on the upper plot of \cref{fig:1.1b}, it can be observed that the maximum ratio of all three interpolations 
grows exponentially as $[\exp(d)]^n$, where $d$ happens to be the difference in the steady-state potentials, 
as we will demonstrate in \cref{sec:3}.

\begin{figure}[htbp]
  \centering
  \subfloat[]{
    \label{fig:1.1a}
    \includegraphics[width=0.38\linewidth]{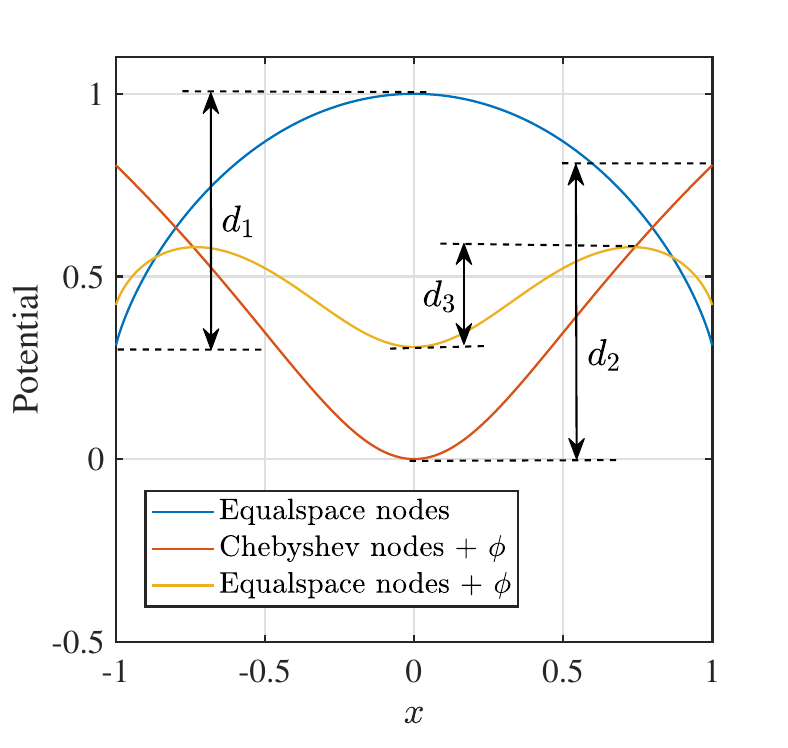}
  }
  \subfloat[]{
    \label{fig:1.1b}
    \includegraphics[width=0.56\linewidth]{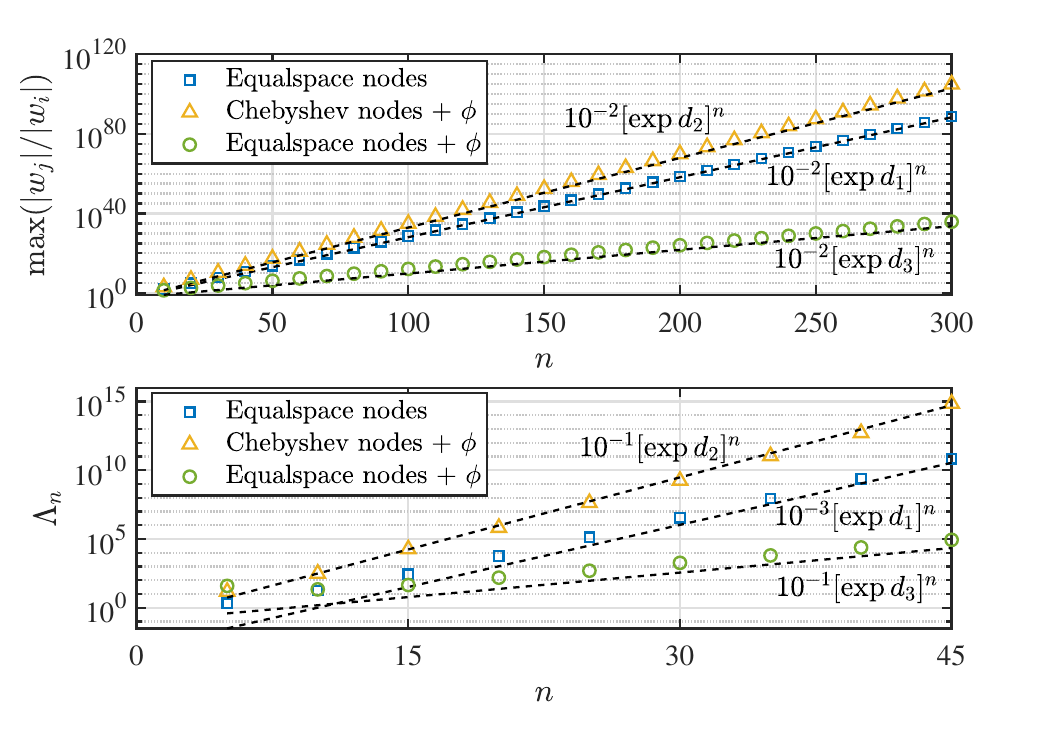}
  }
  \caption{Three potential functions, denoted as $U_a,\,U_b,\,U_c$, along with the differences between their respective maxima and minima: $d_1$, $d_2$, $d_3$. 
  (b): The maximum value of the ratio of absolute values of barycentric weights (top) and the Lebesgue constant (bottom) for the three interpolants. The reference growth rate is $O([\exp d_k]^n)$ ($k=1,2,3$).
  }
  \label{fig:1.1}
\end{figure}

Using \cref{sec:3} as a foundation, we will show in \cref{sec:4} that there exists a lower bound for the Lebesgue 
constants with a similar exponential growth factor $[\exp(d)]^n$. Thus, these Lebesgue constants also exhibit exponential 
growth (as shown in the lower plot of \cref{fig:1.1b}) when the potential function is non-constant on the interval $[-1, 1]$.

A new question arises: If the potential function \cref{eq:1.3} is constant on the interval $[-1, 1]$, how rapidly does the Lebesgue constant grow? 
In \cref{sec:5}, we will provide an upper bound on Lebesgue constant in this scenario. This upper bound includes a logarithmic growth term 
as well as an unknown growth term $\exp(2(n+1)\delta_n^{\pm})$, where $\delta_n^{\pm}$ characterizes the rate at which the discrete potential function \cref{eq:1.4} 
converges to the potential function \cref{eq:1.3}. When $\delta_n^{-} = \mathcal{O}(1/n)$, the Lebesgue constant has an upper bound with logarithmic growth. 

One advantage of polynomial and rational interpolation is the exponential rate of convergence for analytic functions. 
The potential-theoretic explanation of this phenomenon is beautiful and classical. In \cref{sec:6}, we will discuss 
both exponential convergence and Lebesgue constant in terms of potential theory. A sufficient condition for fast and 
stable rational interpolation is given without proof, based on some rational interpolations \cite{Floater2007,Guttel2012,Zhao2023}.

\section{Preliminaries}\label{sec:2}
In this paper, our only requirement for interpolation nodes is that their discrete measures $\mu_n$ converge weakly to some positive measure $\mu$. However, obtaining an intuition for these discrete points from Eq. \cref{eq:1.2} is challenging. 
A definition of the discrete points and the density function was provided in our previous paper , and here it is restricted to the interval $[-1, 1]$:
A definition of a discrete point "obeying" the density function \(w\) was given in our previous paper \cite{Zhao2023}. 
\cref{def:1.1} is sufficient for the weak star convergence of the measure $\mu$. In fact, it is easy to show that \cref{def:1.1} is also necessary for the weak star convergence of the measure $\mu$ on \([-1,1]\). 
The definition restricted to \([-1,1]\) is given here:
\begin{definition}\label{def:1.1}
  A family of point sets $\{\{x_k^{(n)}\}_{k=0}^n:n=1,2,\cdots\}\subset[-1,1]$ obeys the density function $w(t)>0$ for all $t\in[-1,1]$ if, for any segment $[a,b]\subseteq[-1,1]$, the family of point sets satisfies:
  \begin{equation}\label{eq:3a}
  \lim_{n\to\infty}\frac{n_{[a,b]}}{n+1}=\int_a^b w(t)\,\mathrm{d}t,
  \end{equation}
  where $n_{[a,b]}$ denotes the number of points on $[a,b]$.
\end{definition}

In addition to the constraints on the nodes, we also have a requirement on the poles that the external field \(\phi_n\) 
generated by the poles converges consistently to \(\phi\) on \([-1,1]\). It may seem difficult to find poles to satisfy 
this condition, but in fact, we do not need poles to construct rational interpolants with external fields \(\phi_n\).
Recalling Eq. \cref{eq:1.1c}, we have  
\begin{equation*}
  |w_k|=C \dfrac{\prod_{j=1}^m|x_k-p_j|}{\prod_{i=0,i\ne k}^n|x_k-x_i|}=C \dfrac{\exp[(n+1)\phi_n(x_k)]}{\prod_{i=0,i\ne k}^n|x_k-x_i|},\,\, C\not=0.
\end{equation*}
Since the barycentric weights on the real axis alternate between positive and negative values, the weights can be expressed as 
\begin{equation}\label{eq:2.1}
  w_k=C (-1)^k{\exp[(n+1)\phi_n(x_k)]}\Big/{\prod_{i=0,i\ne k}^n|x_k-x_i|},\,\, C\not=0.
\end{equation}

%Since the interpolating nodes obey a density function $w$, if the discrete potential \cref{eq:1.4} converges to the potential function \cref{eq:1.3} in almost every sense, 
%then this implies that the discrete outfield $\phi_n$ also converges to the outfield $\phi$ almost everywhere on the interval $[-1,1]$.
%In rational interpolation, we aim to avoid poles appearing within the interval $[-1,1]$. To achieve this, we assume that the lower bound of $\min_{0\le j\le n}\{|p_j^{(n)}-x|:x\in[-1,1]\}$ is greater than $0$ for all $n$. 
%Consequently, we can constrain the outfield $\phi$ to be continuously bounded on the interval $[-1,1]$.

%\begin{definition}\label{def:1.2}
%A rational interpolation sequence $r_{n,m}$ on $[-1,1]$ is said to be node-convergent if its nodes obey some nonzero density function $w$ on $[-1,1]$.
%Let the poles of $r_{n,m}$ be $p_j$ where $m(n)$ be an integer about $n$. The function $\phi_n$ on $[-1,1]$ is called the external field of $r_{n,m}$. 
%The rational interpolation is said to be pole-convergent if there exists a smooth function $\phi$ such that
%\[
%  \lim_{n\to\infty}\max_{-1\le x\le 1}|\phi_{n}(x)-\phi(x)|=0.
%\]
%\end{definition}
%\lipsum[4-5]
% The outline is not required, but we show an example here.
%The paper is organized as follows. Our main results are in
%\cref{sec:necessary}, our new algorithm is in \cref{sec:examples}, experimental
%results are in \cref{sec:log}, \cref{sec:BIEP} and the conclusions follow in
%\cref{sec:conclusions}.

\section{Barycentric weights and potential}\label{sec:3}
For any rational function $r_{n,m}$ (where $m \le n$), it can be expressed in the form of barycentric, as described 
in Eq. \cref{eq:1.1c} \cite{Berrut1997}. In this representation, the coefficient $C$ of the barycentric weights can be any nonzero constant. 
The quotient of the largest barycentric weight by the smallest in absolute value is then independent of $C$ and 
the rate of growth of this ratio reflects, to some extent, the quality of an interpolant \cite{Klein2012}.

For polynomial interpolation, there exists a lower bound for the Lebesgue constant, as given by 
\begin{equation*}
  \Lambda_n\ge\frac{1}{2n^2}{\max\limits_{0\le k\le n }|w_k|}\Big/{\min\limits_{0\le k\le n }|w_k|}.
\end{equation*}
This lower bound is one of the reasons for studying the barycentric weights. In the case of more general rational 
interpolation, we can establish a connection between the Lebesgue constant and the barycentric weights as well, 
but this connection relies on the potential function as a bridge, which we will describe in the next section.

We assume that the constant $C$ of the barycentric weights in Eq. \cref{eq:1.1c} is set to $1$, and we express 
the absolute value of barycentric weight in terms of the discrete potential function \cref{eq:1.4}
\begin{equation}\label{eq:3_wk}
  |w_k^{(n)}| = \dfrac{\prod_{j=1}^m|x_k^{(n)}-p_j^{(n)}|}{\prod_{i=0,i\ne k}^n|x_k^{(n)}-x_i^{(n)}|}= \exp[(n+1)u_k^{(n)}(x_k^{(n)})],
\end{equation}
where 
\[
  u_k^{(n)}(x)=U_n(x)+\frac{\log|x-x_k^{(n)}|}{n+1}.
\]
Therefore, this part will be developed based on the properties of the discrete potential.

\begin{lemma}\label{le:3.1}
  Assume a class of rational interpolants $r_{n,m}$ with nodes $\{x_i^{(n)}\}_{i=0}^n$ in the interval $[-1,1]$ obeying 
  a positive density function $w$, and whose poles $\{p_j^{(n)}\}_{j=1}^m$ lie outside $[-1,1]$, generating an external 
  field $\phi_n$ that converges to $\phi$. If $\phi_n$ is twice differentiable on $[-1,1]$ and its second derivative has 
  a lower bound $M$, then there exists $N > 0$ such that for $n > N$, the discrete potential $U_{n}$ of the rational 
  interpolants is convex on $[-1,1]-\{x_i^{(n)}\}_{i=0}^n$.
\end{lemma}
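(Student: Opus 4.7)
\emph{Proof plan.} For $x \in [-1,1] \setminus \{x_i^{(n)}\}_{i=0}^n$, differentiating \cref{eq:1.4} twice gives
\[
U_n''(x) = \frac{1}{n+1}\sum_{i=0}^{n}\frac{1}{(x-x_i^{(n)})^2} + \phi_n''(x).
\]
Every term of the sum is strictly positive and $\phi_n''(x)\ge M$ by hypothesis, so the statement is trivial if $M\ge 0$. Assume $M<0$; it then suffices to prove
\[
S_n(x):=\frac{1}{n+1}\sum_{i=0}^{n}\frac{1}{(x-x_i^{(n)})^2} > -M
\]
uniformly on $[-1,1]\setminus\{x_i^{(n)}\}_{i=0}^n$ for all sufficiently large $n$.

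The plan is to localize. Fix a radius $r\in(0,1]$ (to be chosen) and, for each $x\in[-1,1]$, set $I_r(x)=[x-r,x+r]\cap[-1,1]$. Any node lying in $I_r(x)$ contributes at least $1/r^2$ to the sum, so
\[
S_n(x)\ge \frac{\#\{i:x_i^{(n)}\in I_r(x)\}}{(n+1)\,r^2}=\frac{\mu_n(I_r(x))}{r^2}.
\]
It therefore remains to find a uniform-in-$x$ lower bound on $\mu_n(I_r(x))$.

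By the weak convergence $\mu_n\to\mu$ encoded in \cref{eq:1.2}, the empirical distribution function $F_n(x)=\mu_n([-1,x])$ converges pointwise to $F(x)=\mu([-1,x])$; since each $F_n$ is monotone and $F$ is continuous (as $\mu$ is absolutely continuous with density $w$), the convergence is uniform on $[-1,1]$ by P\'olya's theorem, whence $\mu_n(I_r(x))\to\mu(I_r(x))$ uniformly in $x$. Writing $w_{\min}:=\min_{[-1,1]}w>0$, we have $\mu(I_r(x))\ge w_{\min}|I_r(x)|\ge w_{\min}r$ for every $x\in[-1,1]$, so for $n$ larger than some $N$ depending on $r$ we obtain $\mu_n(I_r(x))\ge w_{\min}r/2$ uniformly, hence $S_n(x)\ge w_{\min}/(2r)$.

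Finally, selecting $r=w_{\min}/(4|M|)$ gives $S_n(x)\ge 2|M|>-M$ for $n>N$, so $U_n''(x)>0$ and $U_n$ is convex on each connected component of $[-1,1]\setminus\{x_i^{(n)}\}_{i=0}^n$. The main difficulty in this route is promoting weak convergence of $\mu_n$ to a \emph{uniform} lower bound on $\mu_n(I_r(x))$; weak convergence alone provides only pointwise control of masses of intervals, and it is the absolute continuity of $\mu$ (yielding continuity of $F$) that upgrades this to uniform convergence via P\'olya.
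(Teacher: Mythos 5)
Your proof is correct, and it takes a genuinely different route from the paper's. The paper's argument writes $U_n''(x) = \int (x-t)^{-2}\,\mathrm{d}\mu_n(t) + \phi_n''(x)$ and truncates the kernel \emph{in height}: it replaces $(x-t)^{-2}$ by the bounded continuous function $F_{M_1}(x,t)=\min\{M_1,(x-t)^{-2}\}$, chooses $M_1$ large enough that $\int F_{M_1}(x,\cdot)\,\mathrm{d}\mu > -M$ for all $x$, and then invokes the weak-star convergence \cref{eq:1.2} to transfer this to $\mu_n$ for $n$ large. You instead truncate \emph{in support}: you bound $(x-t)^{-2}$ from below by $r^{-2}\mathbf{1}_{|x-t|\le r}$, reduce the problem to a uniform lower bound on $\mu_n(I_r(x))$, and obtain that bound from the positivity of $w$ together with uniform convergence of the empirical distribution functions (P\'olya). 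The two approaches buy different things. The paper's continuous truncation lets weak convergence be applied directly to a single continuous test function (for each fixed $x$), avoiding any appeal to P\'olya's theorem; your indicator truncation produces a discontinuous test function, so you must upgrade weak convergence to uniform CDF convergence, but in exchange you get the uniformity over $x$ essentially for free and you state it explicitly --- a point the paper's proof glosses over (the family $\{F_{M_1}(x,\cdot)\}_x$ is equicontinuous, which justifies the uniform-in-$x$ claim, but the paper does not say so; it also appears to contain a typo, writing $\int F_{M_1}\,\mathrm{d}t$ where $\int F_{M_1}\,\mathrm{d}\mu$ is intended). One small caveat common to both arguments: you write $w_{\min}=\min_{[-1,1]} w > 0$, which requires more than the stated pointwise positivity $w>0$ --- e.g.\ continuity or lower semicontinuity of $w$ --- but the paper's proof needs the same implicit regularity to make $\int F_{M_1}(x,\cdot)\,\mathrm{d}\mu$ uniformly large.
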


\begin{proof}
  Recall the definition of discrete potential
  \[
    U_{n}(x)=\frac{1}{n+1}\sum_{i=0}^{n}\log\frac{1}{|x-x_{i}^{(n)}|}+\phi_n(x),
  \]
  where 
  \[
    \phi_n(x)=\frac{1}{n+1}\sum_{j=1}^{m}\log{|x-p_{j}^{(n)}|}, 
  \]
  we have
  \begin{align}\label{eq:u2}
    U_{n}''(x)&=\frac{1}{n+1}\sum_{i=0}^{n}\frac{1}{|x-x_{i}^{(n)}|^2}+\phi_n''(x),\\
    &=\int_{-1}^{1}\frac{1}{|x-t|^2}\, \mathrm{d}\mu_{n}(t)+\phi_n''(x),
  \end{align}
  where $\mu_n=\sum_{i=0}^{n}\delta_{x_i^{(n)}}/(n+1)$ and $x\notin \{x_i^{(n)}\}_{i=0}^n$.
  %Considering the definition of $\phi$, then there exist $N_1$ and $M_1$, 
  %which satisfy $\max\limits_{-1\le x\le1}\phi_n''(x)<M_1$ for all $n > N_1$.

  Let a function
  \begin{equation}
    F_{M_1}(x,t)=\min\{M_1,1/(x-t)^2\}
  \end{equation}
  where $M_1$ such that $\int_{-1}^1 F_{M_1}(x,t)\,\mathrm{d}t>-M$ for all $x\in[-1,1]$. 
  Then there exists $N_1$ such that $\int_{-1}^1 F_{M_1}(x,t)\,\mathrm{d}\mu_{n}(t)>-M$ for all $n > N_1$.
  Therefore, we have
  \[
    U_{n}''(x)>\int_{-1}^1 F_{M_1}(x,t)\,\mathrm{d}\mu_{n}(t)+M>0.
  \]
  for all $n>N_1$ and $x\in[-1,1]-\{x_i^{(n)}\}_{i=0}^n$.

\end{proof}

\begin{remark}\label{re}
  Clearly, the conclusion of \cref{le:3.1} still holds if we remove one point from the $n+1$ interpolation nodes $\{x_i^{(n)}\}_{i=0}^n$.
\end{remark}

Since for sufficiently large $n$, the discrete potential $U_n$ is a convex function on each $(x_{i-1}^{(n)}, x_i^{(n)})$. Since the value of the discrete potential on the nodes is $+\infty$, 
then there exists a unique minima of $U_n$ in each $(x_{i-1}^{(n)}, x_i^{(n)})$. Therefore we give a new definition: 

\begin{definition}\label{def:3.2}
  For sufficiently large $n$,  the discrete potential $U_{n}$ is convex on each $(x_{i-1}^{(n)}, x_i^{(n)})$, so there exists a unique $\{\zeta_i^{(n)}\}_{i=1}^n\in[-1,1]$ such that $U_{n}'(\zeta_i^{(n)})=0$ and $\zeta_i^{(n)}\in(x_{i-1}^{(n)}, x_i^{(n)})$. We call $\{\zeta_i^{(n)}\}_{i=1}^n$ the inter-potential points of $\{x_i^{(n)}\}_{i=0}^n$.
\end{definition}

\vspace{0.5 em}

These inter-potential points have the following property:
\begin{lemma}\label{le:3.3}
  If the rational interpolation's nodes $\{x_i^{(n)}\}_{i=0}^n\in[-1,1]$ satisfies
\[
  a_1 n^{-b_1}\le (x_{i}^{(n)}-x_{i-1}^{(n)})\le a_2 n^{-b_2},
\]
the external field $\phi_n$ satisfies 
\[
|\phi_n'(x)|<M_2\log{n},\quad \forall x\in(-1+a_1n^{-b_1-1},1-a_1n^{-b_1-1}).
\]
Then for sufficiently large $n$, there is
\[
\zeta_i^{(n)}\in(x_{i-1}^{(n)}+a_1n^{-b_1-1}, x_i^{(n)}- a_1n^{-b_1-1}).
\]
\end{lemma}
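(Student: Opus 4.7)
The plan is to show that on $(x_{i-1}^{(n)}, x_i^{(n)})$ the derivative $U_n'$ is strictly negative at $y^- := x_{i-1}^{(n)} + a_1 n^{-b_1-1}$ and strictly positive at $y^+ := x_i^{(n)} - a_1 n^{-b_1-1}$. Since by \cref{le:3.1} the function $U_n$ is convex on this open interval (and hence $U_n'$ is strictly increasing), this forces the unique zero $\zeta_i^{(n)}$ of $U_n'$ to lie in $(y^-, y^+)$.

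Differentiating the definition of $U_n$ gives
\begin{equation*}
  U_n'(x) = -\frac{1}{n+1}\sum_{j=0}^{n}\frac{1}{x-x_j^{(n)}} + \phi_n'(x).
\end{equation*}
At $x = y^-$ I would isolate the singular contribution from the nearest node $x_{i-1}^{(n)}$: the term $-\frac{1}{(n+1)(y^- - x_{i-1}^{(n)})} = -\frac{1}{(n+1)\, a_1 n^{-b_1-1}}$ is of order $-n^{b_1}/a_1$. For the remaining indices $j \ne i-1$, the lower spacing bound $x_{k+1}^{(n)}-x_k^{(n)} \ge a_1 n^{-b_1}$ telescopes to give
\begin{equation*}
  |y^- - x_j^{(n)}| \ge a_1 n^{-b_1}\bigl(|j-(i-1)| - 1/n\bigr)\ge \tfrac{1}{2}\, a_1 n^{-b_1} |j-(i-1)|
\end{equation*}
for $n \ge 2$. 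Summing over $j \ne i-1$ then yields
\begin{equation*}
  \Bigl|\sum_{j \ne i-1}\frac{1}{y^- - x_j^{(n)}}\Bigr| \le \frac{2}{a_1 n^{-b_1}}\sum_{k=1}^{n}\frac{1}{k} = O\bigl(n^{b_1}\log n\bigr),
\end{equation*}
so after dividing by $n+1$ this remainder is $O(n^{b_1-1}\log n)$, which is dominated by the singular term of order $n^{b_1}$ (note $b_1\ge 1$ is forced by fitting $n+1$ points into $[-1,1]$). Finally, the hypothesis $|\phi_n'(y^-)| < M_2\log n$ contributes only a $\log n$ correction. Hence $U_n'(y^-) < 0$ for $n$ sufficiently large, and an entirely symmetric estimate at $y^+$ (where now $x_i^{(n)}$ is the dominant near-singularity and its contribution is $+n^{b_1}/a_1$) gives $U_n'(y^+) > 0$.

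The main obstacle is getting a clean, uniform (in $i$) bound on the ``regular part'' of the sum using only the one-sided spacing hypothesis; the telescoping argument above is the cleanest route, and it is important that the bound is $O(n^{b_1}\log n)$ rather than $O(n^{b_1+1})$ so that division by $n+1$ makes it negligible compared with the isolated singular term. Once these two inequalities are established, the conclusion follows immediately from the convexity of $U_n$ on $(x_{i-1}^{(n)}, x_i^{(n)})$ and \cref{re} (which allows us to apply \cref{le:3.1} on each open subinterval).
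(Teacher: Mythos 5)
Your proposal is correct and follows essentially the same route as the paper: isolate the singular contribution from the nearest node (of order $n^{b_1}$), bound the remaining regular part using the lower spacing bound to get $O(n^{b_1-1}\log n)$, absorb the $O(\log n)$ from $\phi_n'$, and conclude via convexity of $U_n$ on each subinterval. The only cosmetic difference is that you bound the absolute value of the entire regular sum, whereas the paper drops the same-sign (favorable) terms and only estimates those on the far side of the near node; both yield the needed sign conclusion.
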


\begin{proof}
By \cref{def:3.2}, we have $U_n'(\zeta_i^{(n)})=0$, where 
\begin{equation}
  U_n'(x)=\frac{1}{n+1}\sum_{i=0}^{n}\frac{-1}{x-x_{i}^{(n)}}+\phi_n'(x).
\end{equation}  
Notice that $U_n''(x)>0$ for $x\in[-1,1]-\{x_i^{(n)}\}_{i=0}^n$, so we only need to prove
\begin{equation}
  U_n'(x_{i-1}^{(n)}+a_1n^{-b_1-1})<0,\quad U_n'(x_{i}^{(n)}-a_1n^{-b_1-1})>0.
\end{equation}

\begin{itemize}
  \item When $x=x_{i-1}^{(n)}+a_1n^{-b_1-1}$:
  \begin{align}
    U_n'(x)&=\frac{1}{n+1}\sum_{k=0}^{n}\frac{-1}{x_{i-1}-x_k+a_1n^{-b_1-1}}+\phi'(x)\notag \\
    &\le \frac{-n^{b_1+1}}{a_1(n+1)}+\frac{1}{n+1}\sum_{k=i}^{n}\frac{n^{b_1}}{(k-i+1)a_1-a_1n^{-1}}+M_2\log{n} \\
    &=-\frac{n^{b_1}}{a_1(n+1)}(n-\sum_{k=1}^{n-i+1}\frac{1}{k-n^{-1}})+M_2\log{n}.
  \end{align}
  It is easy to prove $b_1\ge 1$, then  $U_n'(x)<0$ for sufficiently large $n$.
  
  \item When $x=x_{i}^{(n)}-a_1n^{-b_1-1}$:
  \begin{align}
    U_n'(x)&=\frac{1}{n+1}\sum_{k=0}^{n}\frac{-1}{x_{i-1}-x_k-a_1n^{-b_1-1}}+\phi'(x)\notag \\
    &\ge \frac{n^{b_1+1}}{a_1(n+1)}-\frac{1}{n+1}\sum_{k=0}^{i-1}\frac{n^{b_1}}{(i-k)a_1-a_1n^{-1}}-M_2\log{n} \\
    &=\frac{n^{b_1}}{a_1(n+1)}(n-\sum_{k=1}^{i}\frac{1}{k-n^{-1}})-M_2\log{n}.
  \end{align}
  So $U_n'(x)>0$ for sufficiently large $n$.
\end{itemize}
\end{proof}

Based on the above lemmas, we can give upper and lower bound estimates for the barycentric weights as follows: 

\begin{theorem}\label{th:3.4}
Suppose a family of rational interpolants $r_{n,m}$ on $[-1,1]$ with nodes $\{x_i^{(n)}\}_{i=0}^n$ and its poles $\{p_j^{(n)}\}_{j=1}^m\notin[-1,1]$, the node obeys the density function $w$, the external field $\phi_n$ converges to $\phi$ and its inter-potential point is $\{\zeta_i^{(n)}\}_{i=1}^n$.
If
\begin{align*}
  &\mathrm{(i)}\,\,\,  a_1 n^{-b_1}\le (x_{i}^{(n)}-x_{i-1}^{(n)})\le a_2 n^{-b_2};\\
  &\mathrm{(ii)}\,\,  |\phi_n'(x)|<M_2\log{n},\quad \forall x\in(-1+a_1n^{-b_1-1},1-a_1n^{-b_1-1});\\
  &\mathrm{(iii)}\, -\delta_n^{-}\le U_{n}(\zeta_i^{(n)})-U(\zeta_i^{(n)}) \le \delta_n^{+},\quad \forall\, 1\le i\le n,
\end{align*}
then
\[
\frac{a_1}{en^{b_1+1}}[e^{U(\zeta_{k_i}^{(n)})}]^{n+1}e^{-(n+1)\delta_n^-}<|w_i|<\frac{a_2}{n^{b_2}}[e^{U(\zeta_{k_i}^{(n)})}]^{n+1}e^{(n+1)\delta_n^+}
\]
where $u_i^{(n)}(x)=U_n(x)+\frac{1}{n+1}\log{|x-x_i|}$ and $\zeta_{k_i}=\arg\max\{u_i^{(n)}(\zeta_i^{(n)}),u_i^{(n)}(\zeta_{i+1}^{(n)})\}$ 
for $0<i<n$ $(k_0=1,\, k_n=n-1)$.
\end{theorem}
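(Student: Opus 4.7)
The identity $|w_i^{(n)}| = \exp[(n+1)\,u_i^{(n)}(x_i^{(n)})]$ of \cref{eq:3_wk} reduces the theorem to matching upper and lower bounds on $u_i^{(n)}(x_i^{(n)})$. Since $u_i^{(n)}(x) = U_n(x) + \log|x-x_i^{(n)}|/(n+1)$ is exactly the discrete potential with the $i$-th node removed, \cref{le:3.1} and \cref{re} together imply that $u_i^{(n)}$ is convex on $(x_{i-1}^{(n)}, x_{i+1}^{(n)})$ for $n$ sufficiently large; in particular it is convex on the closed subinterval $[\zeta_i^{(n)}, \zeta_{i+1}^{(n)}]$, which contains $x_i^{(n)}$. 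Everything that follows rests on this convexity.

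For the upper bound, the standard fact that a convex function on a closed interval attains its maximum at an endpoint gives
\[
  u_i^{(n)}(x_i^{(n)}) \;\le\; u_i^{(n)}(\zeta_{k_i}^{(n)}) \;=\; U_n(\zeta_{k_i}^{(n)}) + \frac{\log|\zeta_{k_i}^{(n)} - x_i^{(n)}|}{n+1}.
\]
Applying hypothesis (iii) to obtain $U_n(\zeta_{k_i}^{(n)}) \le U(\zeta_{k_i}^{(n)}) + \delta_n^+$, and hypothesis (i) to obtain $|\zeta_{k_i}^{(n)} - x_i^{(n)}| \le a_2 n^{-b_2}$, then multiplying by $n+1$ and exponentiating, produces the upper bound stated in the theorem.

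For the lower bound, I invoke the tangent-line inequality for the convex function $u_i^{(n)}$ at the point $\zeta_{k_i}^{(n)}$,
\[
  u_i^{(n)}(x_i^{(n)}) \;\ge\; u_i^{(n)}(\zeta_{k_i}^{(n)}) + (u_i^{(n)})'(\zeta_{k_i}^{(n)})\,(x_i^{(n)} - \zeta_{k_i}^{(n)}).
\]
The crucial computation is that $U_n'(\zeta_{k_i}^{(n)}) = 0$ by \cref{def:3.2}, so
\[
  (u_i^{(n)})'(\zeta_{k_i}^{(n)}) \;=\; U_n'(\zeta_{k_i}^{(n)}) + \frac{1}{(n+1)(\zeta_{k_i}^{(n)} - x_i^{(n)})} \;=\; \frac{1}{(n+1)(\zeta_{k_i}^{(n)} - x_i^{(n)})},
\]
and the entire tangent-line correction collapses to exactly $-1/(n+1)$ regardless of how close $\zeta_{k_i}^{(n)}$ and $x_i^{(n)}$ happen to be. Thus $u_i^{(n)}(x_i^{(n)}) \ge u_i^{(n)}(\zeta_{k_i}^{(n)}) - 1/(n+1)$; combining this with the other half of (iii), $U_n(\zeta_{k_i}^{(n)}) \ge U(\zeta_{k_i}^{(n)}) - \delta_n^-$, and with the separation estimate $|\zeta_{k_i}^{(n)} - x_i^{(n)}| \ge a_1 n^{-b_1-1}$ supplied by \cref{le:3.3}, then exponentiating, yields the lower bound. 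The factor $1/e$ in the denominator is precisely the exponential of the $-1/(n+1)$ slope correction.

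I do not anticipate a serious obstacle beyond the tangent-line cancellation that drives the lower bound: without the identity $U_n'(\zeta_{k_i}^{(n)}) = 0$, the slope $(u_i^{(n)})'(\zeta_{k_i}^{(n)})$ could be as large as $\mathcal{O}(n^{b_1})$ in magnitude, which would inflate the bound by a factor $\exp[\mathcal{O}(n^{b_1-b_2})]$ and ruin the theorem. The boundary indices $i=0$ and $i=n$ have only one adjacent inter-potential point, but the tangent-line step applies unchanged there with $\zeta_1^{(n)}$ or $\zeta_n^{(n)}$, and the upper bound at those indices requires only a small separate argument based on the monotonicity of $u_i^{(n)}$ on the half-interval past the single $\zeta$.
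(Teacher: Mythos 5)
Your proposal is correct and follows essentially the same route as the paper's own proof: express $|w_i|$ via $u_i^{(n)}(x_i^{(n)})$, use convexity of $u_i^{(n)}$ on $(x_{i-1}^{(n)},x_{i+1}^{(n)})$ from \cref{le:3.1}/\cref{re}, take the endpoint $\zeta_{k_i}^{(n)}$ for the upper bound and the tangent line at $\zeta_{k_i}^{(n)}$ for the lower bound, feed in \cref{le:3.3} for the separation $a_1n^{-b_1-1}<|\zeta_{k_i}^{(n)}-x_i^{(n)}|<a_2n^{-b_2}$ and hypothesis (iii) to pass from $U_n$ to $U$, and handle $i=0,n$ by monotonicity. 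If anything, you make the paper's key step more transparent by spelling out that $U_n'(\zeta_{k_i}^{(n)})=0$ forces the tangent-line correction $(u_i^{(n)})'(\zeta_{k_i}^{(n)})(x_i^{(n)}-\zeta_{k_i}^{(n)})$ to collapse to exactly $-1/(n+1)$, which the paper leaves implicit in passing from \cref{eq:3.4} to the next display.
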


\begin{proof}
  From \cref{re}, we have $u_i^{(n)}$ is convex on $(x_{i-1}^{(n)},x_{i+1}^{(n)})$ if $0<i<n$. Then 
  \begin{align}\label{eq:3.4}
    u_i^{(n)}(\zeta_{k_i})+(u_i^{(n)})'(\zeta_{k_i}^{(n)})(-\zeta_{k_i}^{(n)}&+x_i)\le u_i^{(n)}(x_i)\le u_i^{(n)}(\zeta_{k_i}^{(n)})\\
    U_{n}(\zeta_{k_i}^{(n)})+\frac{\log|-\zeta_{k_i}^{(n)}-x_i|-1}{n+1}\le &u_i^{(n)}(x_i)\le U_{n}(\zeta_{k_i}^{(n)})+\frac{\log|-\zeta_{k_i}^{(n)}-x_i|}{n+1}.\notag
  \end{align}
  Considering the barycentric weights
  \begin{equation*}
    |w_k^{(n)}| = \dfrac{\prod_{j=1}^m|x_k-p_j|}{\prod_{i=0,i\ne k}^n|x_k-x_i|}= \exp[(n+1)u_k^{(n)}(x_k)],
  \end{equation*}
  we have 
  \begin{equation*}
    U_{n}(\zeta_{k_i}^{(n)})+\frac{\log|-\zeta_{k_i}^{(n)}-x_i|-1}{n+1}\le\frac{\log|w_i^{(n)}|}{n+1}\le U_{n}(\zeta_{k_i}^{(n)})+\frac{\log|-\zeta_{k_i}^{(n)}-x_i|}{n+1}.
  \end{equation*}
  From \cref{le:3.3}, it is easy to prove $a_1n^{-b_1-1}<|\zeta_{k_i}^{(n)}-x_i|<a_2 n^{-b_2}$. Then we have
  \begin{equation*}
    U_{n}(\zeta_{k_i}^{(n)})+\frac{\log{a_1}-\log{n^{b_1+1}}-1}{n+1}<\frac{\log|w_i^{(n)}|}{n+1}< U_{n}(\zeta_{k_i}^{(n)})+\frac{\log{a_2}-\log{n^{b_2}}}{n+1}.
  \end{equation*}
  Therefore the barycentric weights has
  \begin{equation}\label{eq:3.5}
    \frac{a_1e^{-(n+1)\delta_n^-}}{en^{b_1+1}}[e^{U(\zeta_{k_i}^{(n)})}]^{n+1}<|w_i^{(n)}|<\frac{a_2e^{(n+1)\delta_n^+}}{n^{b_2}}[e^{U(\zeta_{k_i}^{(n)})}]^{n+1}.
  \end{equation}
  for a sufficiently large $n$ and $0<i<n$.

  When \(i=0\), \(u_i^{(n)}\) is convex on \([-1, x_1^{(n)})\). It is easy to show that \(u_0^{(n)}\) is monotonically 
  increasing on \([-1, x_1^{(n)})\) when \(n\) is large enough. Thus, \(u_0^{(n)}\) satisfies Eq. \cref{eq:3.4}. 
  Similarly, it follows that \(u_n^{(n)}\) also satisfies Eq. \cref{eq:3.4}. Then, Eq. \cref{eq:3.5} also holds 
  when \(i=0\) or \(n\).
\end{proof}

If $U$ is constant on the interval $[-1,1]$, then for all absolute values of the barycentric weights $|w_i|$, the upper and lower bounds share the same exponential term. This suggests that we can adjust the value of $C$ in Eq. \cref{eq:1.1c} to eliminate this common exponential factor. Consequently, we can derive simplified weights that do not exhibit exponential growth with $n$.

If $U$ is continuous on the interval $[-1,1]$ but not constant, we can let  
\begin{equation*}
  \underline{x} \in \arg\min_{x\in[-1,1]}(U(x)),\quad \overline{x} \in \arg\max_{x\in[-1,1]}(U(x)).
\end{equation*}
Given the existence of a consistent upper bound on the distance between neighboring nodes, denoted by $a_2n^{-b_2}$, 
we can always find two sequences of nodes $\{x_{i(n)}^{(n)}\}_{n=1}^\infty$ and $\{x_{j(n)}^{(n)}\}_{n=1}^\infty$ such that 
\begin{equation*}
\lim_{n\to\infty}x_{i(n)}^{(n)}=\underline{x},\quad \lim_{n\to\infty}x_{j(n)}^{(n)}=\overline{x}.
\end{equation*}
holds. This leads to the following corollary:

\begin{corollary}
  For sufficiently large $n$, 
  \begin{equation*}
    \frac{a_1n^{b_2-b_1-1}}{ea_2}e^{-(n+1)(\delta_n^-+\delta_n^+)}\rho^{n+1}\le\dfrac{\max\limits_{0\le k\le n}|w_k^{(n)}|}{\min\limits_{0\le k\le n}|w_k^{(n)}|}\le \frac{ea_2}{a_1n^{b_2-b_1-1}}e^{(n+1)(\delta_n^-+\delta_n^+)}\rho^{n+1}
  \end{equation*}
  holds, where $\rho=e^{U(\overline{x})-U(\underline{x})}$.
\end{corollary}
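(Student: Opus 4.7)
The plan is to apply the two-sided bound from \cref{th:3.4} in two modes: once uniformly in $k$ to get the upper estimate on the quotient, and once at carefully chosen indices tracking $\overline{x}$ and $\underline{x}$ to get the lower estimate. The factor $\rho^{n+1}=e^{(n+1)(U(\overline{x})-U(\underline{x}))}$ is exactly what arises when the exponent $U(\zeta_{k_i}^{(n)})$ in \cref{th:3.4} is pinned at the extremes of $U$ on $[-1,1]$, so the work amounts to realizing these extremes.

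For the upper bound of the ratio, I would substitute $U(\zeta_{k_k}^{(n)}) \le U(\overline{x})$ into the upper estimate of \cref{th:3.4} for each $k$, and $U(\zeta_{k_k}^{(n)}) \ge U(\underline{x})$ into the lower estimate for each $k$. This gives
\[
\max_{0\le k\le n}|w_k^{(n)}| \le \frac{a_2}{n^{b_2}}\,e^{(n+1)U(\overline{x})}\,e^{(n+1)\delta_n^+}, \qquad \min_{0\le k\le n}|w_k^{(n)}| \ge \frac{a_1}{e\,n^{b_1+1}}\,e^{(n+1)U(\underline{x})}\,e^{-(n+1)\delta_n^-},
\]
and a single division delivers the right-hand inequality.

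For the lower bound of the ratio, I would use the two sequences $\{x_{i(n)}^{(n)}\}$ and $\{x_{j(n)}^{(n)}\}$ constructed just before the corollary, with $x_{i(n)}^{(n)}\to\underline{x}$ and $x_{j(n)}^{(n)}\to\overline{x}$. By \cref{le:3.3} the inter-potential points $\zeta_{k_{i(n)}}^{(n)}$ and $\zeta_{k_{j(n)}}^{(n)}$ stay within distance $a_2 n^{-b_2}$ of their neighboring nodes, so they converge to $\underline{x}$ and $\overline{x}$ respectively; continuity of $U$ then gives $U(\zeta_{k_{j(n)}}^{(n)})\to U(\overline{x})$ and $U(\zeta_{k_{i(n)}}^{(n)})\to U(\underline{x})$. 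Since $\max_k|w_k^{(n)}|\ge |w_{j(n)}^{(n)}|$ and $\min_k|w_k^{(n)}|\le |w_{i(n)}^{(n)}|$, applying the lower bound of \cref{th:3.4} to the former and the upper bound to the latter and dividing yields the left-hand inequality.

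The main obstacle I anticipate is the last step: continuity of $U$ only guarantees $U(\zeta_{k_{j(n)}}^{(n)})-U(\zeta_{k_{i(n)}}^{(n)}) = \log\rho + o(1)$, whereas the statement requires the clean factor $\rho^{n+1}$, so the $o(1)$ gets amplified by $n+1$. I would resolve this either by absorbing the vanishing discrepancy into slightly enlarged $\delta_n^\pm$ sequences (which are still $o(1)$, so the form of the bound is preserved), or more quantitatively by invoking condition (i) to get an explicit rate $|\zeta_{k_{j(n)}}^{(n)}-\overline{x}| = \mathcal{O}(n^{-b_2})$ and combining with a uniform modulus of continuity for $U$ on $[-1,1]$. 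In either reading, $\rho^{n+1}$ emerges as the exponential leading factor with the $e^{\pm(n+1)(\delta_n^-+\delta_n^+)}$ corrections as displayed.
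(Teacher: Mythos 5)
The paper asserts this corollary without proof, leaving the reader to deduce it from \cref{th:3.4} together with the sequences $\{x_{i(n)}^{(n)}\}$ and $\{x_{j(n)}^{(n)}\}$ constructed immediately before it; your reconstruction follows exactly that intended route, and your derivation of the right-hand inequality from the uniform bounds $U(\underline{x}) \le U(\zeta_{k_i}^{(n)}) \le U(\overline{x})$ is clean and complete.

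The obstacle you flag in your last paragraph is a genuine gap, and it is present in the paper's statement, not merely in your proof. Applying \cref{th:3.4} at the indices $j(n)$ and $i(n)$ and dividing produces the exponent $(n+1)\bigl[U(\zeta_{k_{j(n)}}^{(n)}) - U(\zeta_{k_{i(n)}}^{(n)})\bigr]$, which is always $\le (n+1)\log\rho$; so this route yields a lower bound that is \emph{weaker} than the stated one by a factor $\exp\bigl(-(n+1)\varepsilon_n\bigr)$ with $\varepsilon_n := [U(\overline{x})-U(\zeta_{k_{j(n)}}^{(n)})] + [U(\zeta_{k_{i(n)}}^{(n)})-U(\underline{x})] \ge 0$. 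Continuity of $U$ gives $\varepsilon_n \to 0$ but says nothing about $(n+1)\varepsilon_n$, so the clean $\rho^{n+1}$ on the left does not follow. Of your two patches, the first (fold $\varepsilon_n$ into enlarged $\delta_n^{\pm}$) silently redefines $\delta_n^{\pm}$ relative to hypothesis (iii) of \cref{th:3.4}, so it changes the statement rather than proving it. The second runs into a quantitative snag: \cref{def:1.1} with $w>0$ together with condition (i) forces $b_2 \le 1$, so $|\zeta_{k_{j(n)}}^{(n)}-\overline{x}| = \mathcal{O}(n^{-b_2})$ only gives $(n+1)\varepsilon_n = \mathcal{O}(n^{1-b_2})$, which is at best bounded (not vanishing) under a Lipschitz modulus and would need, say, $C^2$ smoothness of $U$ with interior extrema to actually vanish. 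So the left-hand inequality as printed requires either an extra regularity hypothesis on $U$ near its extrema or an additional multiplicative constant absorbing $\sup_n (n+1)\varepsilon_n$; your diagnosis is correct, and the point at which you hesitate is precisely where the paper stops short.
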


\section{Lebesgue constants for exponential growth}\label{sec:4}
The Lebesgue function can also be expressed in terms of the barycentric weights, as shown in 
\begin{equation*}
  \Lambda_n(x)=\sum_{k=0}^{n}|w_k|\prod_{i=0,i\ne k}^{n}|x-x_i|\Big/\prod_{j=1}^{m}|x-p_j|,
\end{equation*}
where the barycentric weights are provided in Eq. \cref{eq:3_wk}. Consequently, the Lebesgue function can be 
inscribed by the potential function.
Considering the definition of Lebesgue constant, it is easy to obtain the following rough lower bound on the potential function.

\begin{theorem}\label{th:4.1}
  %Let the discrete potential of rational interpolation $r_{n,m}$ on $[-1,1]$ be denoted as $U_n$. Suppose there exists a continuous function $U$ on $[-1,1]$ such that $U_n$ converges to $U$ in waek sense, 
  %and satisfies the conditions (i) and (ii) of \cref{th:3.4}.
  With the same premises as \cref{th:3.4}, if $U$ is a continuous function on $[-1,1]$, then for a sufficiently large $n$, the Lebesgue constants $\Lambda_n$ have 
  \[
    \Lambda_n\ge\frac{a_1e^{-(n+1)(\delta_n^++\delta_n^-)}}{2en^{b_1+1}} \rho^{(n+1)},\quad\rho=e^{U(\overline{x})-U(\underline{x})}.
  \]
\end{theorem}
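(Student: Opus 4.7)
The plan is to lower-bound $\Lambda_n$ by a single value $|l_k(y)|$ for a judiciously chosen node index $k$ and evaluation point $y$, then apply \cref{th:3.4} to $|w_k|$ together with condition~(iii) to $U_n(y)$. The key reformulation I would start from is the identity
\begin{equation*}
|l_k(y)|=\frac{|w_k|\,e^{-(n+1)U_n(y)}}{|y-x_k^{(n)}|},
\end{equation*}
which falls out of the definitions once one writes $\prod_{i=0}^{n}|y-x_i^{(n)}|=e^{(n+1)(\phi_n(y)-U_n(y))}$ and $\prod_{j=1}^{m}|y-p_j^{(n)}|=e^{(n+1)\phi_n(y)}$. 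In view of this identity, making $|l_k(y)|$ large requires simultaneously making $|w_k|$ large and $U_n(y)$ small, which points directly to the extremes $\overline{x}$ and $\underline{x}$ of the limiting potential $U$.

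Concretely, I would take $k=j(n)$, where $\{x_{j(n)}^{(n)}\}$ is the sequence of nodes converging to $\overline{x}$ furnished by the uniform spacing bound in~(i), and $y=\zeta^{(n)}$ an inter-potential point adjacent to the node $x_{i(n)}^{(n)}$ that converges to $\underline{x}$. By \cref{le:3.3}, $\zeta^{(n)}$ lies within $a_2 n^{-b_2}$ of $x_{i(n)}^{(n)}$, so $\zeta^{(n)}\to\underline{x}$ as well. Placing $y$ at an inter-potential point is essential: condition~(iii) only controls $U_n-U$ at such points, and $\zeta^{(n)}$ is the local minimizer of the convex $U_n$ on its subinterval. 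With these choices, and using $|\zeta^{(n)}-x_{j(n)}^{(n)}|\le 2$, \cref{th:3.4} supplies $|w_{j(n)}^{(n)}|\ge\frac{a_1}{en^{b_1+1}}e^{(n+1)U(\zeta_{k_{j(n)}}^{(n)})-(n+1)\delta_n^-}$, while condition~(iii) supplies $U_n(\zeta^{(n)})\le U(\zeta^{(n)})+\delta_n^+$. Plugging into the identity above yields
\begin{equation*}
\Lambda_n\ge|l_{j(n)}(\zeta^{(n)})|\ge\frac{a_1\,e^{-(n+1)(\delta_n^-+\delta_n^+)}}{2en^{b_1+1}}\,e^{(n+1)[U(\zeta_{k_{j(n)}}^{(n)})-U(\zeta^{(n)})]}.
\end{equation*}

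The step I expect to be the main obstacle is upgrading $U(\zeta_{k_{j(n)}}^{(n)})-U(\zeta^{(n)})$ to $U(\overline{x})-U(\underline{x})=\log\rho$. Both evaluation points converge to the extremes, so continuity of $U$ makes the exponent differ from $(n+1)\log\rho$ only by a residual that vanishes in the limit; the ``sufficiently large $n$'' clause of the theorem has to absorb this residual, and I would be careful to phrase the last inequality so that it actually matches the stated $\rho^{n+1}$. Once this continuity argument is in place, no further work is required: every other ingredient---the identity for $|l_k(y)|$, the weight bound from \cref{th:3.4}, the potential bound from condition~(iii), and the node-spacing estimate from \cref{le:3.3}---is purely mechanical.
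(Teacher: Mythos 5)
Your proposal is correct and follows essentially the same route as the paper: the paper likewise writes $\Lambda_n(\zeta_j^{(n)})\ge|R_i^{(n)}(\zeta_j^{(n)})|=e^{-(n+1)U_n(\zeta_j^{(n)})}|w_i|/|\zeta_j^{(n)}-x_i|$, applies the lower bound of \cref{th:3.4} to $|w_i|$ and condition (iii) to $U_n(\zeta_j^{(n)})$, bounds $|\zeta_j^{(n)}-x_i|\le 2$, and then picks sequences of inter-potential points $\zeta_{\alpha_n}^{(n)}\to\overline{x}$, $\zeta_{\beta_n}^{(n)}\to\underline{x}$. The ``main obstacle'' you flag --- that $U(\zeta_{\alpha_n}^{(n)})-U(\zeta_{\beta_n}^{(n)})\le U(\overline{x})-U(\underline{x})$ always, so after multiplying by $n+1$ the residual need not vanish --- is genuine, but the paper passes over it in exactly the same way, so you have reproduced the paper's argument faithfully including its loose end.
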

\begin{proof}
  First we represent the Lebesgue function using discrete potentials $U_n$ and barycentric weights $w_k$:
  \begin{align*}
    \Lambda_n(x)=\sum_{k=0}^{n}|R_k^{(n)}(x)| &= \sum_{k=0}^{n}|w_k|\prod_{i=0,i\ne k}^{n}|x-x_i|\Big/\prod_{j=1}^{m}|x-p_j|\\
    &=\sum_{k=0}^{n}[e^{-U_n(x)}]^{n+1}\frac{|w_k|}{|x-x_k|},
  \end{align*}
  where $R_k^{(n)}(x_k)=1$ and $R_k^{(n)}(x_i)=0$ for $0\le i \le n\,(i\ne k)$. 
  Considering \cref{th:3.4}, we have
  \begin{align*}
    \Lambda_n(\zeta_j^{(n)}) >|R_i^{(n)}(\zeta_j^{(n)})|&=[e^{-U_n(\zeta_j^{(n)})}]^{n+1}\frac{|w_k|}{|\zeta_j^{(n)}-x_k|}\\
    & >[e^{-U(\zeta_j^{(n)})}]^{n+1}e^{-(n+1)\delta_n^+}\frac{a_1}{en^{b_1+1}}[e^{U(\zeta_{k_i}^{(n)})}]^{n+1}e^{-(n+1)\delta_n^-}/2\\
    & = \frac{a_1e^{-(n+1)(\delta_n^++\delta_n^-)}}{2en^{b_1+1}} [e^{U(\zeta_{k_i}^{(n)})-U(\zeta_j^{(n)})}]^{n+1} 
  \end{align*}
  for all $0\le i,\,j\le n$.
  
  Since \cref{def:3.2}, we can find $\{\zeta_{\alpha_n}^{(n)}\}_{n=1}^\infty$ $(1\le\alpha_n\le n)$ and $\{\zeta_{\beta_n}^{(n)}\}_{n=1}^\infty$ $(1\le\beta_n\le n)$
  such that 
  \begin{equation*}
    \lim_{n\to\infty} \zeta_{\alpha_n}^{(n)} = \overline{x},\quad \lim_{n\to\infty} \zeta_{\beta_n}^{(n)} = \underline{x}.
  \end{equation*}
  Therefore, the Lebesgue constants $\Lambda_n$ have 
  \[
    \Lambda_n\ge\frac{a_1e^{-(n+1)(\delta_n^++\delta_n^-)}}{2en^{b_1+1}} \rho^{(n+1)},\quad\rho=e^{U(\overline{x})-U(\underline{x})}.
  \]
  for a sufficiently large $n$.
\end{proof}

The result of \cref{th:4.1} is rather crude. This limitation arises from our use of only one basis function 
whose absolute value exhibits exponential growth. The Lebesgue function, however, is the sum of the absolute values 
of $n+1$ basis functions. Despite this, our choice of a basis function with the fastest exponential growth rate 
allows us to inscribe the exponential growth rate of the Lebesgue constant by the difference of the potential functions, 
precisely what we aim to demonstrate. The coefficients and algebraic growth part of \cref{th:4.1} are not our primary focus.

Referring to the proof of \cref{th:4.1}, we can determine the exponential growth rate of the Lebesgue function at 
any point $\hat{x}\in[-1,1]$. Assuming that $\hat{x}$ is not always an interpolation node, the exponential part of 
the growth rate of $R_i^{(n)}(\hat{x})$ with respect to $n$ does not exceed $[\exp(U(\hat{x})-U(\underline{x}))]^n$, 
where $x_i^{(n)}$ converges to $\underline{x}$. Since $U(\underline{x})$ is a minimum, the exponential part of the growth rate 
of $\Lambda_n(\hat{x})$ likewise does not exceed $[\exp(U(\hat{x})-U(\underline{x}))]^n$.

To verify the above conclusions, we first consider an example of polynomial interpolation (EXAMPLE 1). The density 
function of its nodes is given by 
\begin{equation*}
  w(t)=\frac{\exp(-t^2)}{\sqrt{\pi}\mathrm{Erf}(1)},
\end{equation*}
where $\mathrm{Erf}$ is Gauss error function \cite{Abramowitz1964}.
This density function is the result of a Gaussian distribution restricted to $[-1,1]$ and normalized. 
\cref{fig:4.1a} illustrates the corresponding Lebesgue function for this polynomial interpolation 
at different values of $n$. It can be observed that the Lebesgue function grows at varying rates at different points.
\begin{figure}[htbp]
  \centering
  \subfloat[]{
    \label{fig:4.1a}
    \includegraphics[width=0.47\linewidth]{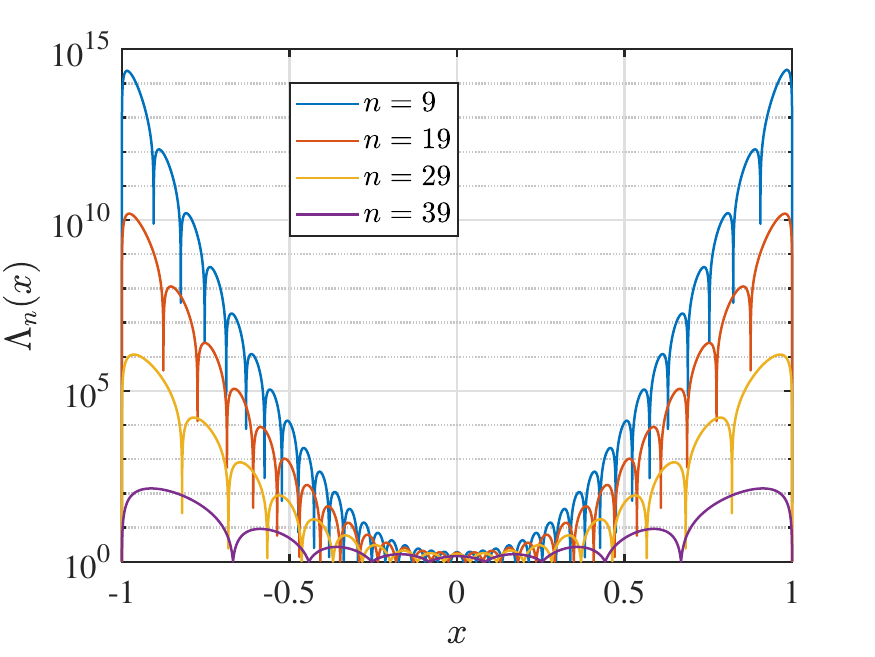}
  }
  \subfloat[]{
    \label{fig:4.1b}
    \includegraphics[width=0.47\linewidth]{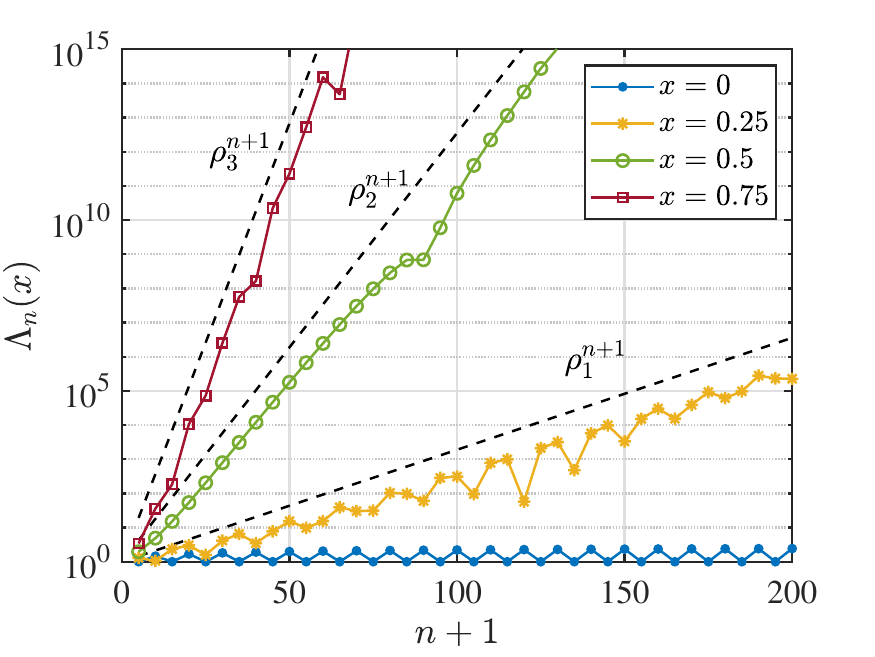}
  }
  \caption{(a): Lebesgue functions of EXAMPLE 1 at different $n$. (b): Values of Lebesgue functions of EXAMPLE 1 at different points.}
  \label{fig:4.1}
\end{figure}

\cref{fig:4.1b} then presents the values of the Lebesgue function at $0$, $0.25$, $0.5$, and $0.75$, 
along with the reference growth rate we have provided. In the figure, $\rho_1=\exp(U(0)-U(0.25))$, $\rho_2=\exp(U(0)-U(0.5))$, and $\rho_3=\exp(U(0)-U(0.75))$, 
since $U(0)$ is the maximum value of $U$ on $[-1,1]$. As $n$ grows, the four points we fixed will not always be 
in the middle between nodes, causing the Lebesgue functions at these points to fluctuate. However, overall, it coincides with our reference line.

This also indicates that the Lebesgue constant of an interpolation method, even if it grows rapidly, does not imply 
that it will be universally affected by rounding errors. For points where the value of the potential function is 
close to the maximum, the impact from rounding errors will be small. Conversely, for points other than the 
interpolation nodes, the lower the value of the potential function, the greater the amplification of the rounding error.

Reducing the Lebesgue constant of polynomial interpolation can be achieved effectively by adding poles. 
The density function in EXAMPLE 1 has a larger potential difference compared to that of the density function 
of equidistant nodes, resulting in a faster exponential growth rate for its Lebesgue constant. However, 
if we add poles (external field), such as the external field given by 
\begin{equation*}
  \phi(x)=\frac{1}{2}\log|x-0.5\mathrm{i}|+\frac{1}{2}\log|x+0.5\mathrm{i}|
\end{equation*}
in the \cref{sec:int}, the difference in the potential function is reduced (EXAMPLE 2), as shown in \cref{fig:4.2a}.

\begin{figure}[htbp]
  \centering
  \subfloat[]{
    \label{fig:4.2a}
    \includegraphics[width=0.47\linewidth]{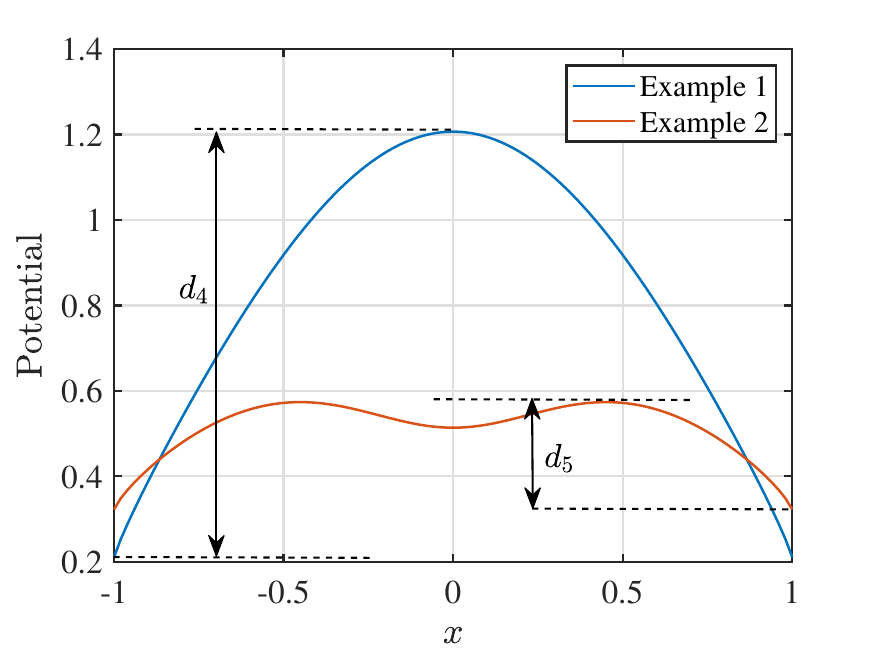}
  }
  \subfloat[]{
    \label{fig:4.2b}
    \includegraphics[width=0.47\linewidth]{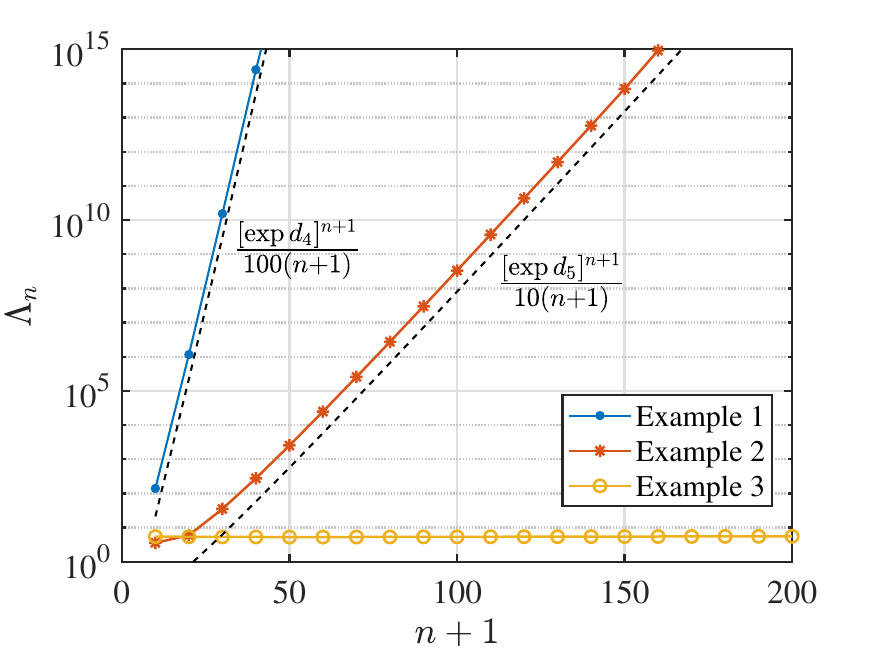}
  }
  \caption{ (a): Potential functions of EXAMPLE 1 and EXAMPLE 2 along with the difference between their respective maximum and minimum values: $d_4$, $d_5$. 
  (b): Lebesgue constants of EXAMPLE 1, EXAMPLE 2, and EXAMPLE 3 with their respective growth reference lines.}
  \label{fig:4.2}
\end{figure}

\cref{th:4.1} shows that the exponentially growing part of the Lebesgue constant for EXAMPLE 1 and EXAMPLE 2 
is $\mathcal{O}([\exp d_4]^n)$ and $\mathcal{O}([\exp d_5]^n)$, respectively. It is straightforward to 
demonstrate that this density function corresponds to the parameter $b_1=1$. Assuming $\delta_n=\mathcal{O}(1/n)$, 
the lower bounds on the growth rate of the Lebesgue constants for both are given by $\mathcal{O}([\exp d_4]^n/n^2)$ and $\mathcal{O}([\exp d_5]^n/n^2)$. 
However, the reference lines provided in the figure are given by $\mathcal{O}([\exp d_4]^n/n)$ and $\mathcal{O}([\exp d_5]^n/n)$, 
which are $n$ times larger than the estimates in \cref{th:4.1}.

The reason for this is that the lower bound in \cref{th:4.1} arises from one basis function $|R_i^{(n)}(\zeta_j^{(n)})|$ 
with a growth rate of $\mathcal{O}(\rho^n/n^{b_1+1})$ where $\rho=\exp(U(\alpha)-U(\beta))$. In reality, there is not just one basis function with a similar exponential growth, 
but an infinite number of them of the same order as $n$. 
Let a very small quantity $c>0$, $\mathbf{X}_c = \{x\in[-1,1]:U(x) > U(\alpha)-c\}$ if $U$ is continuous on $[-1,1]$. 
Then, for any node $x_k^{(n)}\in \mathbf{X}_c$, $|R_k^{(n)}(\zeta_j^{(n)})|$ grows faster than $\mathcal{O}(\rho_c^n/n^{b_1+1})$ where $\rho_c=\exp(U(\alpha)-U(\beta)-c)$. 
The number of these nodes $n_c$ satisfies $\lim_{n\to\infty}{n_c}/{n+1}=\int_{\mathbf{X}_c}w(t)\, \mathrm{d}t$.
Thus the actual growth rate in Examples 1 and 2 is $n$ times the lower bound of \cref{th:4.1}.

\begin{figure}[htbp]
  \centering
  \subfloat[]{
    \label{fig:4.3a}
    \includegraphics[width=0.47\linewidth]{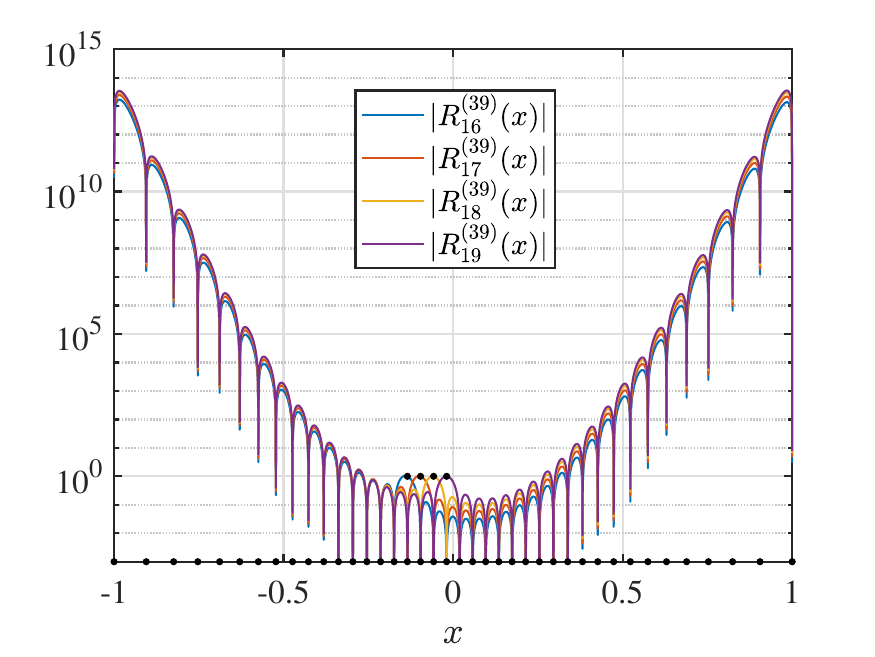}
  }
  \subfloat[]{
    \label{fig:4.3b}
    \includegraphics[width=0.47\linewidth]{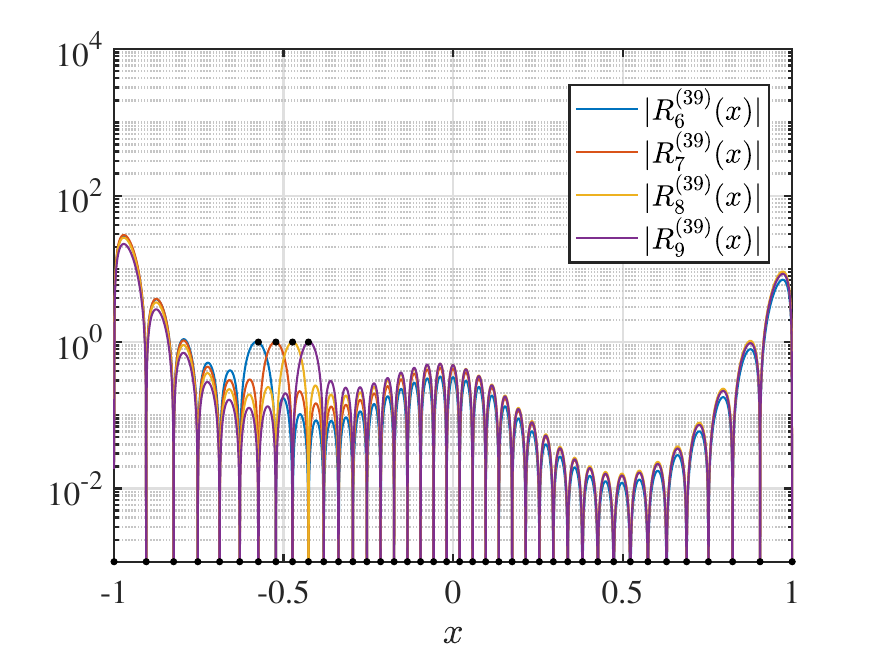}
  }
  \caption{Absolute values of several Lagrangian basis functions for EXAMPLE 1 (a) and EXAMPLE 2 (b) when $n=39$. 
  $R_k^{(n)}$ denotes the basis functions on $x_k^{(n)}$.}
  \label{fig:4.3}
\end{figure}

A question arises: Can the exponential growth of the Lebesgue constant be avoided by adding poles? The answer is yes, 
as long as the potential function can be made constant on $[-1,1]$.
We want $\phi_n$ to satisfy
\begin{equation*}
  \lim_{n\to\infty} \int_{-1}^{1}\log\frac{1}{|x-t|}w(t)\,\mathrm{d}t+\phi_n(x)=\bar{U}, \,\,\forall x\in[-1,1]
\end{equation*}
where $\bar{U}$ is a constant. We can set 
\begin{equation*}
 \phi_n(x)=\bar{U}- \int_{-1}^{1}\log\frac{1}{|x-t|}w(t)\,\mathrm{d}t.
\end{equation*}
Then according Eq. \cref{eq:2.1}, we have
\begin{equation*}
  w_k=C (-1)^k{\exp[-(n+1)\int_{-1}^{1}\log\frac{1}{|x_k-t|}w(t)\,\mathrm{d}t]}\Big/{\prod_{i=0,i\ne k}^n|x_k-x_i|},\,\, C\not=0.
\end{equation*}
We used such a rational interpolant for the density function in EXAMPLE 1 to obtain EXAMPLE 3. The new rational interpolant avoids the exponential growth of Lebesgue constant, as illustrated in \cref{fig:4.2b}.

\section{The Lebesgue constant in an equilibrium potential}\label{sec:5}
In the case of the equilibrium potential, EXAMPLE 3 demonstrates no exponential growth in its Lebesgue constant. 
However, \cref{th:4.1} only establishes that the exponential growth part of the lower bound estimate for the Lebesgue 
constant vanishes for the equilibrium potential. To ascertain whether the Lebesgue constant avoids exponential growth, 
we need to estimate its upper bound.

This proof is divided into two steps. First, it is necessary to demonstrate that the amplitude of the 
oscillations of the basis functions does not increase exponentially with $n$, i.e., the phenomenon depicted 
in \cref{fig:4.3} does not occur (\cref{le:5.1}, \cref{le:5.2}). The second step is then to estimate the 
accumulation of the absolute values of the basis functions to obtain a consistent upper bound for the 
Lebesgue function - in other words, to obtain an upper bound estimate for the Lebesgue constant (\cref{th:5.3}). 
The results show the existence of a non-exponentially increasing upper bound for the Lebesgue constant at 
the equilibrium potential.

\begin{lemma}\label{le:5.1}
  Two right triangles $\Delta ABC$ and $\Delta DEF$ are shown in Figure 1. Extend $AB$ to intersect $EF$ at $H$. Make $HG$ perpendicular to $BE$ at $G$. If $|BC| = |DE|$, then 
  \[
    \frac{|BG|}{|EG|}=\frac{|DF|}{|AC|}.
  \]
\end{lemma}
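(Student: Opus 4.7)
The plan is to build two pairs of similar right triangles from perpendiculars to $BE$ and then combine the resulting proportions. By construction $HG\perp BE$ at $G$; the configuration in Figure~1 places $BC\perp BE$ at $B$ and $DE\perp BE$ at $E$, so the three segments $BC$, $DE$, and $HG$ are mutually parallel. This parallelism is the geometric engine that drives everything else.

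First I would use the collinearity of $A$, $B$, $H$ (the defining property of $H$ as the extension of $AB$) together with $BC\parallel HG$ to identify $\triangle HBG$ and $\triangle ABC$ as similar: both are right-angled (at $G$ and at $C$, respectively), and they share the common acute angle cut out at $B$ by the line $ABH$ meeting line $BE$. Standard AA similarity then gives the proportion
\[
  \frac{|BG|}{|AC|} \;=\; \frac{|HG|}{|BC|}.
\]
An entirely symmetric argument, applied to $\triangle HEG$ and $\triangle FED$ using the collinearity of $H$, $E$, $F$ and the parallelism $DE\parallel HG$, produces
\[
  \frac{|EG|}{|DF|} \;=\; \frac{|HG|}{|DE|}.
\]
Dividing the first proportion by the second and invoking the hypothesis $|BC|=|DE|$ to cancel the common factor $|HG|$ yields the claimed identity.

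The main obstacle is not the geometry itself — once the three mutually parallel perpendiculars $BC$, $DE$, $HG$ are identified, the two similarities are immediate — but the bookkeeping of the vertex correspondences in each similarity. Getting these right is precisely what guarantees that the proportions pair $|BG|$ with $|AC|$ and $|EG|$ with $|DF|$, so that taking the quotient produces the form $|DF|/|AC|$ rather than its reciprocal or some other permutation. A careful reading of Figure~1 (in particular, the relative positions of $A$ and $F$ with respect to $B$ and $E$, and which side of $BE$ contains $H$) is what fixes this correspondence unambiguously.
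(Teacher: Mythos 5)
Your overall strategy — two similar right triangles sharing the altitude $HG$, take the two proportions, divide, and cancel $|HG|$ using $|BC|=|DE|$ — is the right idea, and the paper itself offers no written proof (it leans entirely on Figure~5.1(a)). But your reconstruction of the figure is off in a way that actually flips the answer, and your own algebra already betrays the problem.

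You place $BC$ and $DE$ perpendicular to $BE$, so that $BC\parallel DE\parallel HG$. Tracing how the lemma is invoked in the proof of \cref{le:5.2} shows the configuration is different: there $A$ and $C$ sit at the same height (as do $F$ and $D$), while $B,C,D,E$ all lie on the single line $BE$. Concretely, $A=(\zeta_i,y_A)$, $C=(x_i,y_A)$, $B$ is where the tangent at $A$ crosses $x=x_i$, and $|BC|=|\zeta_i-x_i|\,|u_i'(\zeta_i)|=\tfrac1{n+1}$. So $BC$ and $DE$ lie \emph{along} $BE$, while $AC$ and $DF$ are the legs perpendicular to $BE$ — i.e.\ it is $AC$ and $DF$, not $BC$ and $DE$, that are parallel to $HG$. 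With that geometry the similarities are $\triangle HGB\sim\triangle ACB$ and $\triangle HGE\sim\triangle FDE$, giving $\tfrac{|BG|}{|BC|}=\tfrac{|HG|}{|AC|}$ and $\tfrac{|EG|}{|DE|}=\tfrac{|HG|}{|DF|}$; dividing and using $|BC|=|DE|$ then produces $\tfrac{|BG|}{|EG|}=\tfrac{|DF|}{|AC|}$ as stated. (This is also exactly what is needed so that $y_H=\lambda y_A+(1-\lambda)y_F-\tfrac1{n+1}$ with $\lambda=\tfrac{|AC|}{|AC|+|DF|}$, which is the inequality \cref{eq:5.2a} used downstream.)

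Independently of the figure-reading, there is a computational slip you should have caught: from your two proportions $\tfrac{|BG|}{|AC|}=\tfrac{|HG|}{|BC|}$ and $\tfrac{|EG|}{|DF|}=\tfrac{|HG|}{|DE|}$, dividing and cancelling $|HG|$ via $|BC|=|DE|$ gives $\tfrac{|BG|}{|EG|}=\tfrac{|AC|}{|DF|}$, which is the \emph{reciprocal} of the claimed identity — not "the claimed identity" as your last sentence asserts. The reciprocal coming out is a reliable diagnostic that $BC$ and $AC$ have swapped roles in your picture. A minor further point: the angle at $B$ in $\triangle ABC$ is between $BA$ and $BC$, not between $BA$ and $BE$, so in your configuration the acute angles at $B$ in the two triangles are complementary rather than equal, and the correspondence you named ($\triangle HBG\sim\triangle ABC$) does not match the proportion you then wrote down. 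The idea is salvageable; just fix the configuration so that the shared line is $BE=BCDE$ and the parallels to $HG$ are $AC$ and $DF$, and the bookkeeping falls into place.
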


\begin{figure}[htbp]
  \centering
  \subfloat[]{
    \label{fig:5.1a}
    \includegraphics[width=0.38\linewidth]{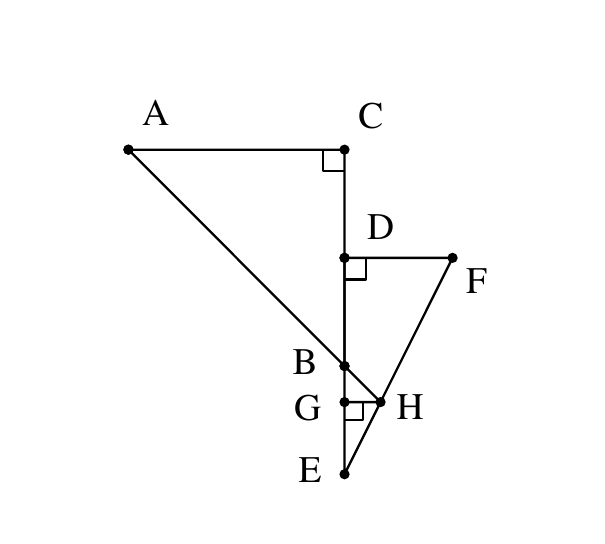}
  }
  \subfloat[]{
    \label{fig:5.1b}
    \includegraphics[width=0.56\linewidth]{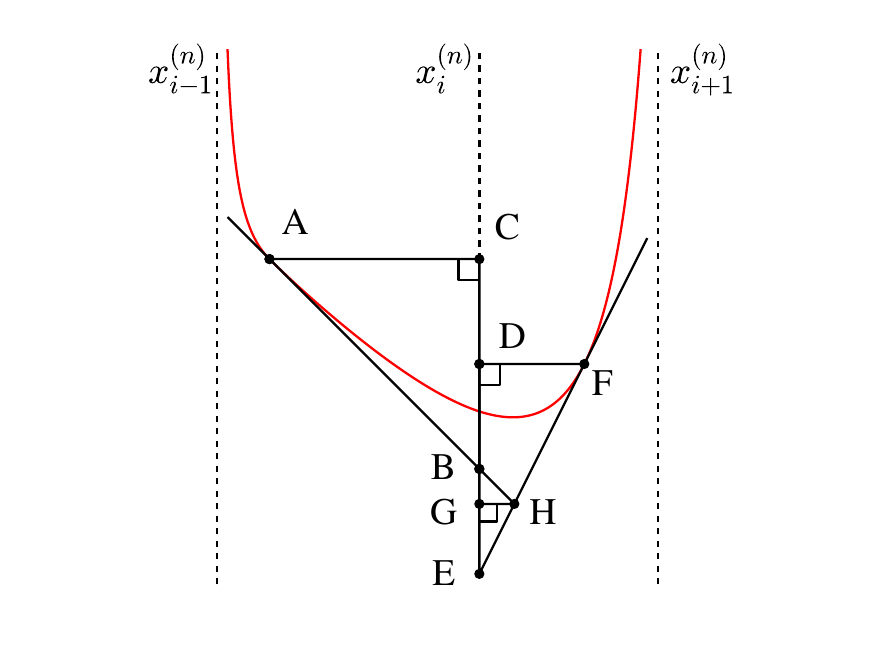}
  }
  \caption{(a): Schematic representation of \cref{le:5.1}. (b): Schematic representation of the proof of \cref{le:5.2}.}
  \label{fig:5.1}
\end{figure}

\begin{lemma}\label{le:5.2}
  Under the assumptions of \cref{th:3.4}, if $U$ is constant and equal to the constant $C$ on [-1,1], then the Lagrangian basis function $R_i^{(n)}$ with rational interpolation $r_{n,m}$ has the following upper bound for a sufficiently large $n$:
\begin{itemize}
  \item For all $x\in(x_{i-1}^{(n)},x_{i+1}^{(n)})$
  \begin{equation*}
    |R_i^{(n)}(x)|\le e^{(n+1)(\delta_n^-+\delta_n^+)+1};
  \end{equation*}
  \item For all $x\in[-1,1]-(x_{i-1}^{(n)},x_{i+1}^{(n)})$
  \begin{equation*}
    |R_i^{(n)}(x)|\le \min(e^{(n+1)(\delta_n^-+\delta_n^+)+1}\frac{|\zeta_{i}^{(n)}-x_i^{(n)}|}{|x-x_i^{(n)}|},\, e^{(n+1)(\delta_n^-+\delta_n^+)+1}\frac{|\zeta_{i+1}^{(n)}-x_i^{(n)}|}{|x-x_i^{(n)}|}).
  \end{equation*}
\end{itemize}
\end{lemma}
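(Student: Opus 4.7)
My approach would begin by recasting the basis function in terms of the smoothed potential
\[
  u_i^{(n)}(x) = U_n(x) + \frac{\log|x - x_i^{(n)}|}{n+1}
\]
from \cref{th:3.4}. A direct manipulation of the Lagrange-type expression $R_i^{(n)}(x) = \bigl[\prod_{j\ne i}(x-x_j)/(x_i-x_j)\bigr]\cdot Q(x_i)/Q(x)$ together with the barycentric weight identity $|w_i^{(n)}| = \exp[(n+1)u_i^{(n)}(x_i^{(n)})]$ yields the clean representation
\[
  |R_i^{(n)}(x)| = \exp\!\bigl[(n+1)\bigl(u_i^{(n)}(x_i^{(n)}) - u_i^{(n)}(x)\bigr)\bigr],
\]
which reduces the whole problem to upper-bounding the scalar difference $u_i^{(n)}(x_i) - u_i^{(n)}(x)$. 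The hypothesis $U\equiv C$ simplifies assumption (iii) of \cref{th:3.4} to the uniform estimate $|U_n(\zeta_j^{(n)}) - C|\le\max(\delta_n^+,\delta_n^-)$ at every inter-potential point, and the remark after \cref{le:3.1} guarantees that $u_i^{(n)}$ is convex on $(x_{i-1}^{(n)}, x_{i+1}^{(n)})$ and on each sub-interval between consecutive nodes not containing $x_i$.

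For Case~1 ($x\in(x_{i-1},x_{i+1})$) I would combine the secant upper bound $u_i^{(n)}(x_i)\le u_i^{(n)}(\zeta_{k_i}^{(n)})$ already derived inside the proof of \cref{th:3.4} with a tangent-line lower bound on $u_i^{(n)}(x)$. The natural tangent to draw is the one at whichever of $\zeta_i^{(n)},\zeta_{i+1}^{(n)}$ lies on the same side of $x_i$ as $x$. Since $U_n'(\zeta_j^{(n)})=0$, the slope of that tangent is exactly $1/[(n+1)(\zeta_j-x_i)]$, so its value across the whole sub-interval $(x_{i-1},x_i)$ or $(x_i,x_{i+1})$ never drops more than $1/(n+1)$ below $u_i^{(n)}(\zeta_j^{(n)})$. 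Combining this with (iii) and $U\equiv C$ gives $u_i^{(n)}(x_i)-u_i^{(n)}(x)\le(\delta_n^++\delta_n^-)+1/(n+1)$, which is the first bound after exponentiation.

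For Case~2 ($x\notin(x_{i-1},x_{i+1})$) the same upper bound on $u_i^{(n)}(x_i)$ still applies, but the lower bound on $u_i^{(n)}(x)$ is more delicate because $u_i^{(n)}$ is only piecewise convex between nodes, so a tangent from $\zeta_{i+1}^{(n)}$ cannot be extrapolated past the log singularity at $x_{i+1}^{(n)}$. My plan is to invoke \cref{le:5.1}: I would match the two right triangles in that lemma to a geometric configuration whose vertical legs encode the distances $|\zeta_{i+1}^{(n)}-x_i^{(n)}|$ and $|x-x_i^{(n)}|$, and whose horizontal legs encode the corresponding differences of $u_i^{(n)}$ values, so that the identity $|BG|/|EG|=|DF|/|AC|$ converts the needed inequality into the geometric ratio $|\zeta_{i+1}^{(n)}-x_i^{(n)}|/|x-x_i^{(n)}|$. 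The analogous setup with $\zeta_i^{(n)}$ in place of $\zeta_{i+1}^{(n)}$ produces the other term in the min.

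The hard part will be Case~2, namely choosing the triangle configuration so that \cref{le:5.1}'s ratio identity cleanly bridges the log singularities of $u_i^{(n)}$ at the intermediate nodes without introducing spurious constants. Case~1 is largely bookkeeping with the tangent-line inequality, the only subtlety being the need to pick the correct $\zeta_j^{(n)}$ on the same side of $x_i$ as $x$ so that the tangent's deviation stays bounded by $1/(n+1)$ rather than something that involves the (possibly unfavorable) spacing ratio between the two inter-potential points flanking $x_i$.
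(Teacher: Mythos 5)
Your representation $|R_i^{(n)}(x)| = \exp[(n+1)(u_i^{(n)}(x_i^{(n)}) - u_i^{(n)}(x))]$ matches the paper's starting point, and you correctly identify convexity of $u_i^{(n)}$ and tangent-line bounds as the engine. But there is a genuine gap in how you bound $u_i^{(n)}(x_i^{(n)})$ from above, and it breaks both cases.

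In Case 1 you pair the Theorem~\ref{th:3.4} bound $u_i^{(n)}(x_i^{(n)}) \le u_i^{(n)}(\zeta_{k_i}^{(n)})$ (the maximum over the two flanking inter-potential points) with a tangent at whichever $\zeta_j^{(n)}$ is on the same side of $x_i^{(n)}$ as $x$. Your final bound is then $u_i^{(n)}(\zeta_{k_i}^{(n)}) - u_i^{(n)}(\zeta_j^{(n)}) + \tfrac{1}{n+1}$. When $\zeta_{k_i}^{(n)} \ne \zeta_j^{(n)}$ this contains the residual
\[
  \frac{1}{n+1}\log\frac{|\zeta_{k_i}^{(n)} - x_i^{(n)}|}{|\zeta_j^{(n)} - x_i^{(n)}|},
\]
which assumption~(i) only constrains to $O(\log n / n)$; it need not be non-positive, since the argmax $\zeta_{k_i}^{(n)}$ is typically the \emph{farther} of the two from $x_i^{(n)}$. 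After multiplying by $(n+1)$ and exponentiating this injects a spurious polynomial factor $n^{O(1)}$, which the lemma does not allow. Your closing remark about "picking the correct $\zeta_j^{(n)}$ on the same side" fixes the tangent's drop but not this mismatch: the problematic term comes from the \emph{upper} bound on $u_i^{(n)}(x_i^{(n)})$, not from the tangent. The paper's remedy is precisely Lemma~\ref{le:5.1}: it draws tangents at \emph{both} $\zeta_i^{(n)}$ and $\zeta_{i+1}^{(n)}$ and locates their intersection $H$, giving $u_i^{(n)}(x) > (1-\lambda)y_F + \lambda y_A - \tfrac{1}{n+1}$ with $\lambda = |AC|/(|AC|+|DF|)$. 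Paired with the exact convex-combination upper bound $u_i^{(n)}(x_i^{(n)}) \le (1-\lambda)y_A + \lambda y_F$ (not the max!), the difference becomes $(1-2\lambda)(y_A - y_F) + \tfrac{1}{n+1}$, in which the spacing-ratio contribution $\tfrac{|DF|-|AC|}{|DF|+|AC|}\cdot\tfrac{1}{n+1}\log\tfrac{|AC|}{|DF|}$ is provably $\le 0$. The max bound throws away exactly the cancellation you need.

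For Case 2 you again plan to reuse "the same upper bound on $u_i^{(n)}(x_i^{(n)})$" and to invoke Lemma~\ref{le:5.1} with a new triangle configuration. This departs from the paper: the paper does not use Lemma~\ref{le:5.1} in Case 2 at all. It lower-bounds $u_i^{(n)}(x)$ by the global estimate $U_n(x) > C - \delta_n^-$ (valid everywhere since each $\zeta_j^{(n)}$ is a local minimum of $U_n$) and then controls $u_i^{(n)}(x_i^{(n)})$ via the \emph{same} convex-combination bound together with the elementary inequalities $(1-\lambda)\log\lambda + \lambda\log(1-\lambda) < \log\lambda + 1$ and $< \log(1-\lambda)+1$, which is what produces \emph{both} terms inside the $\min(\cdot,\cdot)$. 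Your max bound cannot yield the $\min$: it gives at best $e^{\cdots}\,|\zeta_{k_i}^{(n)}-x_i^{(n)}|/|x-x_i^{(n)}|$ for the particular argmax $\zeta_{k_i}^{(n)}$, and there is no reason $u_i^{(n)}(x_i^{(n)}) \le u_i^{(n)}(\zeta_j^{(n)})$ should hold for the other $\zeta_j^{(n)}$. Since Theorem~\ref{th:5.3} uses the freedom to pick the same-side term from the $\min$ at every index $k$, weakening the lemma to the argmax would break its proof. The fix in both cases is to replace the max bound by the convex-combination bound $u_i^{(n)}(x_i^{(n)}) \le (1-\lambda)y_A + \lambda y_F$.
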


\begin{proof}
  According the assumptions, we have $U_n(x)>U(x)-\delta_n^-=C-\delta_n^-$. Let 
  \begin{equation*}
    u_i^{(n)}(x)=U_n(x)+\frac{1}{n+1}\log|x-x_i^{(n)}|.
  \end{equation*}

\begin{itemize}
  \item For all $x\in(x_{i-1}^{(n)},x_{i+1}^{(n)})$.
  
  Since \cref{le:3.1} and \cref{re}, $u_i^{(n)}$ is convex on $(x_{i-1}^{(n)},x_{i+1}^{(n)})$ for a sufficiently large $n$. 
  As show in \cref{fig:5.1b}, let $A=(\zeta_{i}^{(n)},u_i^{(n)}(\zeta_{i}^{(n)}))$, $F=(\zeta_{i+1}^{(n)},u_i^{(n)}(\zeta_{i+1}^{(n)}))$ 
  and make tangents $AH$ and $FH$ to $u_i^{(n)}$ through $A$ and $B$. 
  Then the convex function $u_i^{(n)}$ will be above the point $H$, i.e., 
  \begin{equation*}
    u_i^{(n)}(x)>y_H, \quad \forall x\in(x_{i-1}^{(n)},x_{i+1}^{(n)}).
  \end{equation*}

  Notably 
  \begin{equation*}
    |BC|=|\zeta_{i}^{(n)}-x_i^{(n)}||(u_i^{(n)})'(\zeta_{i}^{(n)})|=\frac{1}{n+1}
  \end{equation*}
  and
  \begin{equation*}
    |DE|=|\zeta_{i+1}^{(n)}-x_i^{(n)}||(u_i^{(n)})'(\zeta_{i+1}^{(n)})|=\frac{1}{n+1}.
  \end{equation*}
  Then from \cref{le:5.1}, we have
  \begin{equation}\label{eq:5.2a}
    u_i^{(n)}(x)>(1-\lambda)y_F+\lambda y_A-\frac{1}{n+1}, \quad \forall x\in(x_{i-1}^{(n)},x_{i+1}^{(n)})
  \end{equation}
  where 
  \begin{equation*}
    \lambda = \frac{|AC|}{|DF|+|AC|},\quad 1-\lambda = \frac{|DF|}{|DF|+|AC|}.
  \end{equation*}

  In addition, 
  \begin{equation}\label{eq:5.2b}
    u_i^{(n)}(x_i^{(n)})\le(1-\lambda)y_A+\lambda y_F.
  \end{equation}
  Considering  
  \begin{equation*}
    C-\delta_n^-+\frac{1}{n+1}\log|AC|<y_A=u_i^{(n)}(\zeta_{i}^{(n)})<C+\delta_n^++\frac{1}{n+1}\log|AC|
  \end{equation*}
  and
  \begin{equation*}
    C-\delta_n^-+\frac{1}{n+1}\log|DF|<y_F=u_i^{(n)}(\zeta_{i+1}^{(n)})<C+\delta_n^++\frac{1}{n+1}\log|DF|,
  \end{equation*}
  it is easy to prove that 
  \begin{equation}\label{eq:5.2c}
    y_F-y_A\le \delta_n^-+\delta_n^+-\frac{1}{n+1}\log\frac{|AC|}{|DF|};\,y_A-y_F\le \delta_n^-+\delta_n^++\frac{1}{n+1}\log\frac{|AC|}{|DF|}.
  \end{equation}

  The basis function $R_i^{(n)}(x)$ has
  \begin{align*}
    R_i^{(n)}(x)&=\dfrac{\prod_{k=0,k\ne i}^{n}|x-x_k^{(n)}|}{\prod_{j=1}^{m}|x-p_j|}|w_i|\\
    &=\exp[-(n+1)u_i^{(n)}(x)]\exp[(n+1)u_i^{(n)}(x_i^{(n)})]\\
    &=\exp[(n+1)(u_i^{(n)}(x_i^{(n)})-u_i^{(n)}(x))].
  \end{align*}
  From Eq. \cref{eq:5.2a}, \cref{eq:5.2b} and Eq. \cref{eq:5.2c}, we have 
  \begin{align*}
    R_k^{(n)}(x)&<\exp\Big[ (n+1)[(1-\lambda)(y_A-y_F)+\lambda(y_F-y_A)]+1\Big]\\
    &\le \exp\Big[\frac{|DF|-|AC|}{|DF|+|AC|}\log\frac{|AC|}{|DF|}+(n+1)(\delta_n^-+\delta_n^+)+1\Big]\\
    &\le \exp[(n+1)(\delta_n^-+\delta_n^+)+1].
  \end{align*}
 
  \item For all $x\in[-1,1]/[x_{i-1}^{(n)},x_{i+1}^{(n)}]$.
  Since $U_n(x)>C-\delta_n^-$, 
  \begin{equation}\label{eq:5.2d}
    u_k^{(n)}(x)>C-\delta_n^-+\frac{1}{n+1}\log|x-x_k^{(n)}|.
  \end{equation}
  %For convenience, let 
  %\begin{equation*}
  %  \frac{|AC|}{|AC|+|DF|}=\lambda;\quad \frac{|DF|}{|AC|+|DF|}=1-\lambda.
  %\end{equation*}
  %Then \cref{eq:5.2b} is denoted as $u_i^{(n)}(x_i^{(n)})\le(1-\lambda)y_A+\lambda y_F$.
  It is easy to prove that
  \begin{equation*}
    y_A=u_i^{(n)}(\zeta_{i}^{(n)})\le C+\delta_n^++\frac{1}{n+1}[\log(\lambda)+\log(|AC|+|DF|)];
  \end{equation*}
  \begin{equation*}
    y_F=u_i^{(n)}(\zeta_{i+1}^{(n)})\le C+\delta_n^++\frac{1}{n+1}[\log(1-\lambda)+\log(|AC|+|DF|)].
  \end{equation*}
  Then we have
  \begin{align*}
    u_i^{(n)}(x_i^{(n)}) &\le (1-\lambda)y_A+\lambda y_F \\
    &\le C+\delta_n^++\frac{(1-\lambda)\log(\lambda)+\lambda\log(1-\lambda)+\log(|AC|+|DF|)}{n+1}.
  \end{align*} 

  It is easy to verify that 
  \begin{align*}
    (1-\lambda)\log(\lambda)+\lambda\log(1-\lambda)&<\log(\lambda)+1;\\
    (1-\lambda)\log(\lambda)+\lambda\log(1-\lambda)&<\log(1-\lambda)+1,
  \end{align*}
  then 
  \begin{equation}\label{eq:5.2e}
    u_i^{(n)}(x_i^{(n)})< C+\delta_n^++\frac{\log|AC|+1}{n+1};
  \end{equation}
  \begin{equation}\label{eq:5.2f}
    u_i^{(n)}(x_i^{(n)})< C+\delta_n^++\frac{\log|DF|+1}{n+1}.
  \end{equation}
  Finally, the lemma is proved due to Eq. \cref{eq:5.2d}, \cref{eq:5.2e}, \cref{eq:5.2f}, and 
  \begin{equation*}
    |R_i^{(n)}(x)|=\exp[(n+1)(u_i^{(n)}(x_i)-u_i^{(n)}(x))].
  \end{equation*}

\end{itemize}
\end{proof}

\begin{theorem}\label{th:5.3}
  Under the assumptions of \cref{th:3.4}, if $U$ is constant and equal to the constant $C$ on [-1,1], then the Lebesgue constant $\Lambda_n$ of rational interpolation $r_{n,m}$ has the following upper bound for a sufficiently large $n$:
  \begin{equation*}
    \Lambda_n\le e^{(n+1)(\delta_n^-+\delta_n^+)+1}(5+2b_1\log{a_1}+2b_1\log{n}).
  \end{equation*}
\end{theorem}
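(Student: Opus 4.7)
The plan is to bound $\Lambda_n(x)=\sum_{k=0}^n|R_k^{(n)}(x)|$ pointwise and then take the maximum over $x$. I would fix an arbitrary $x\in[-1,1]$, locate the subinterval $[x_{j-1}^{(n)},x_j^{(n)}]$ containing it, and split the sum according to whether $k$ is \emph{central} (i.e.\ $k\in\{j-1,j\}$, so that $x\in(x_{k-1}^{(n)},x_{k+1}^{(n)})$) or \emph{peripheral} ($|k-j|\ge 2$, so that $x\notin(x_{k-1}^{(n)},x_{k+1}^{(n)})$). The two central terms are directly controlled by the first bullet of \cref{le:5.2} and contribute at most $2e^{(n+1)(\delta_n^-+\delta_n^+)+1}$. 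For each peripheral $k$ the second bullet of \cref{le:5.2} supplies
\begin{equation*}
|R_k^{(n)}(x)|\le e^{(n+1)(\delta_n^-+\delta_n^+)+1}\,\frac{h_k^{(n)}}{|x-x_k^{(n)}|},\qquad h_k^{(n)}=\min\bigl(|\zeta_k^{(n)}-x_k^{(n)}|,\,|\zeta_{k+1}^{(n)}-x_k^{(n)}|\bigr),
\end{equation*}
so the entire problem reduces to controlling the weighted sum $\sum_{|k-j|\ge 2}h_k^{(n)}/|x-x_k^{(n)}|$.

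I would estimate this sum as a harmonic series. Because $\zeta_k^{(n)}\in(x_{k-1}^{(n)},x_k^{(n)})$ and $\zeta_{k+1}^{(n)}\in(x_k^{(n)},x_{k+1}^{(n)})$, hypothesis~(i) of \cref{th:3.4} yields $h_k^{(n)}\le a_2 n^{-b_2}$, while the lower spacing estimate $x_i^{(n)}-x_{i-1}^{(n)}\ge a_1 n^{-b_1}$ forces $|x-x_k^{(n)}|\ge |k-j'|\,a_1 n^{-b_1}$ for the appropriate $j'\in\{j-1,j\}$. Splitting the sum over $k\le j-2$ and $k\ge j+1$ and substituting these estimates collapses the peripheral contribution to a constant multiple of the harmonic sum $\sum_{m=1}^{n}1/m\le\log n+1$. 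Adding back the central contribution and folding the constants $a_1,a_2,b_1,b_2$ into logarithms gives a bound of the stated form $e^{(n+1)(\delta_n^-+\delta_n^+)+1}(5+2b_1\log a_1+2b_1\log n)$.

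The main obstacle will be the two peripheral indices adjacent to the central block, $k=j-2$ and $k=j+1$. When $x$ lies arbitrarily close to $x_{j-1}^{(n)}$ or $x_j^{(n)}$, the ``wrong'' side of the $\min$ in $h_k^{(n)}$ can combine with a very small denominator $|x-x_k^{(n)}|$ and spoil the harmonic bound. I would handle this by selecting $h_{j-2}^{(n)}\le|\zeta_{j-2}^{(n)}-x_{j-2}^{(n)}|$ and $h_{j+1}^{(n)}\le|\zeta_{j+2}^{(n)}-x_{j+1}^{(n)}|$, i.e.\ taking the side of the $\min$ that stays away from the node closest to $x$. With this choice each peripheral denominator is bounded below by an integer multiple of $a_1 n^{-b_1}$, and the remainder of the argument is routine constant bookkeeping.
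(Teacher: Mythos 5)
Your high-level strategy mirrors the paper's: fix $x$, locate the containing subinterval, split the sum $\Lambda_n(x)=\sum_k|R_k^{(n)}(x)|$ into a few central terms handled by the first bullet of \cref{le:5.2} and a peripheral tail handled by the second bullet. So far this is the right plan and the right key lemma. The gap is in how you estimate the peripheral tail.

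You propose to bound $\sum h_k^{(n)}/|x-x_k^{(n)}|$ as a harmonic series, plugging in the worst-case bounds $h_k^{(n)}\le a_2 n^{-b_2}$ and $|x-x_k^{(n)}|\ge |k-j'|a_1 n^{-b_1}$. This gives
\begin{equation*}
\sum_{k}\frac{h_k^{(n)}}{|x-x_k^{(n)}|}\le \frac{a_2}{a_1}\,n^{b_1-b_2}\sum_{m\ge 1}\frac{1}{m}\lesssim \frac{a_2}{a_1}\,n^{b_1-b_2}\log n,
\end{equation*}
and here the factor $n^{b_1-b_2}$ is not a constant. Since the $n+1$ spacings must add up to $2$, one always has $b_1\ge 1\ge b_2$, with equality only for quasi-uniform grids. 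For Chebyshev-type nodes (which satisfy the hypotheses: $U\equiv\log 2$, $\delta_n^\pm=\mathcal O(1/n)$) one has $b_1=2$, $b_2=1$, so your bound reads $\mathcal O(n\log n)$ rather than the claimed $\mathcal O(\log n)$. The constants $a_1,a_2,b_1,b_2$ do not ``fold into logarithms'' as you assert; the residual $n^{b_1-b_2}$ is fatal. The paper instead keeps the \emph{actual} spacing in the numerator, writes
\begin{equation*}
\sum_{k\le i-3}\frac{x_{k+1}^{(n)}-x_k^{(n)}}{x-x_k^{(n)}}\le \int_{-1}^{x_{i-2}^{(n)}}\frac{\mathrm{d}t}{x-t},
\end{equation*}
a Riemann-sum underestimate of the increasing function $1/(x-t)$, and similarly for $k\ge i+3$. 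This integral comparison is insensitive to how unevenly the spacings are distributed and produces the logarithm directly, with only the $-2\log(a_1 n^{-b_1})$ coming from the two endpoints of integration.

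Two smaller remarks. First, the obstacle you flag in your last paragraph is not real: for the peripheral index $k$ nearest the central block, choosing the side of the $\min$ \emph{toward} $x$ (e.g.\ $|\zeta_{k+1}^{(n)}-x_k^{(n)}|$ for $k<j$) gives a numerator strictly smaller than the denominator $|x-x_k^{(n)}|$, hence a ratio below $1$; this is exactly why the paper can absorb the five indices $i-2,\dots,i+2$ into the additive constant $5$. Your proposed fix (choosing the side \emph{away} from $x$) is what actually reintroduces the $a_2 n^{-b_2}/(a_1 n^{-b_1})$ loss. Second, in the paper the central block has five terms, not two, precisely so that the integral comparison starts two gaps away from $x$ and the boundary contributions $\log(x-x_{i-2}^{(n)})$, $\log(x_{i+2}^{(n)}-x)$ are controlled by a full spacing $\ge a_1 n^{-b_1}$. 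If you replace your harmonic estimate with this integral comparison and enlarge the central block accordingly, the rest of your argument goes through.
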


\begin{proof}
  Consider the Lebesgue function $\Lambda_n(x)$, where $x\in[-1,1]$.
  If $x\in\{x_k^{(n)}\}_{k=0}^n$, $\Lambda_n(x)=1$. Setting $x\notin \{x_k^{(n)}\}_{k=0}^n$, then we can find $i\in\{1,2,\dots,(n-1)\}$ such that $x\in(x_{i-1}^{(n)},x_{i+1}^{(n)})$.
  From \cref{le:5.2}, we have
  \begin{align*}
    \Lambda_n(x) &=\sum_{k=0}^{n}|R_k^{(n)}(x)|\\
    &\le e^{(n+1)(\delta_n^-+\delta_n^+)+1}[\sum_{k=0}^{i-1}\frac{|\zeta_{k+1}^{(n)}-x_{k}^{(n)}|}{|x-x_{k}^{(n)}|}+1+\sum_{k=i+1}^{n}\frac{|\zeta_{k}^{(n)}-x_{k}^{(n)}|}{|x-x_{k}^{(n)}|}]\\
    &< e^{(n+1)(\delta_n^-+\delta_n^+)+1}[\sum_{k=0}^{i-3}\frac{|\zeta_{k+1}^{(n)}-x_{k}^{(n)}|}{|x-x_{k}^{(n)}|}+5+\sum_{k=i+3}^{n}\frac{|\zeta_{k}^{(n)}-x_{k}^{(n)}|}{|x-x_{k}^{(n)}|}]\\
    &< e^{(n+1)(\delta_n^-+\delta_n^+)+1}[\sum_{k=0}^{i-3}\frac{|x_{k+1}^{(n)}-x_{k}^{(n)}|}{|x-x_{k}^{(n)}|}+5+\sum_{k=i+3}^{n}\frac{|x_{k-1}^{(n)}-x_{k}^{(n)}|}{|x-x_{k}^{(n)}|}]\\
    &< e^{(n+1)(\delta_n^-+\delta_n^+)+1}[\int_{-1}^{x_{i-2}^{(n)}}\frac{1}{x-t}\,\mathrm{d}t+5+\int_{x_{i+2}^{(n)}}^{1}\frac{1}{t-x}\,\mathrm{d}t]\\
    &= e^{(n+1)(\delta_n^-+\delta_n^+)+1}[\log(1-x^2)+5-\log(x-x_{i-2}^{(n)})-\log(x_{i+2}^{(n)}-x)]\\
    &< e^{(n+1)(\delta_n^-+\delta_n^+)+1}[5-\log(x_{i-1}^{(n)}-x_{i-2}^{(n)})-\log(x_{i+2}^{(n)}-x_{i+1}^{(n)})]\\
    &< e^{(n+1)(\delta_n^-+\delta_n^+)+1}[5-2\log(a_1 n^{-b_1})]\\
    &< e^{(n+1)(\delta_n^-+\delta_n^+)+1}[5+2b_1\log{a_1} +2b_1\log{n}]
  \end{align*}
  Therefore $\Lambda_n\le e^{2(n+1)(\delta_n^-+\delta_n^+)+1}(5+2b_1\log{a_1}+2b_1\log{n})$.
\end{proof}

\section{Interpolation Error and Lebesgue Constant}\label{sec:6}
According to the conclusion of the previous section, if a rational interpolation, whose nodes obey some nonzero 
density function $w$, has an external field $\phi_n$ that satisfies 
\begin{equation*}
  \lim_{n\to\infty}\max_{x\in[-1,1]}|\hat{U}_n(x)-\bar{U}|=0, \quad \hat{U}_n(x):= \int_{-1}^1\log\frac{1}{|x-t|}w(t)\,\mathrm{d}t+\phi_n(x)
\end{equation*}
where $\bar{U}$ is a constant.
Then the Lebesgue constant of the rational interpolation does not grow exponentially. 

However, the condition to avoid the exponential growth of the Lebesgue constant is not sufficient to achieve 
exponential convergence of the error. An additional condition needed to make rational nterpolation exponentially 
convergent for analytic functions is that the external field $\phi_n$ converges consistently on a neighborhood of $[-1,1]$. 
As shown in \cref{fig:6.1}, if the external field converges consistently on a neighborhood of $[-1,1]$, 
then there exists an enclosing path $\Gamma$ such that the interior of the enclosing path does not contain poles as 
well as singularities of the interpolated function. Then the exponential decay of the error is obtained according 
to the Hermite integral formula for rational interpolation \cite{Zhao2023}.
\begin{figure}[htbp]
	\centering
	\includegraphics[width=0.95\linewidth]{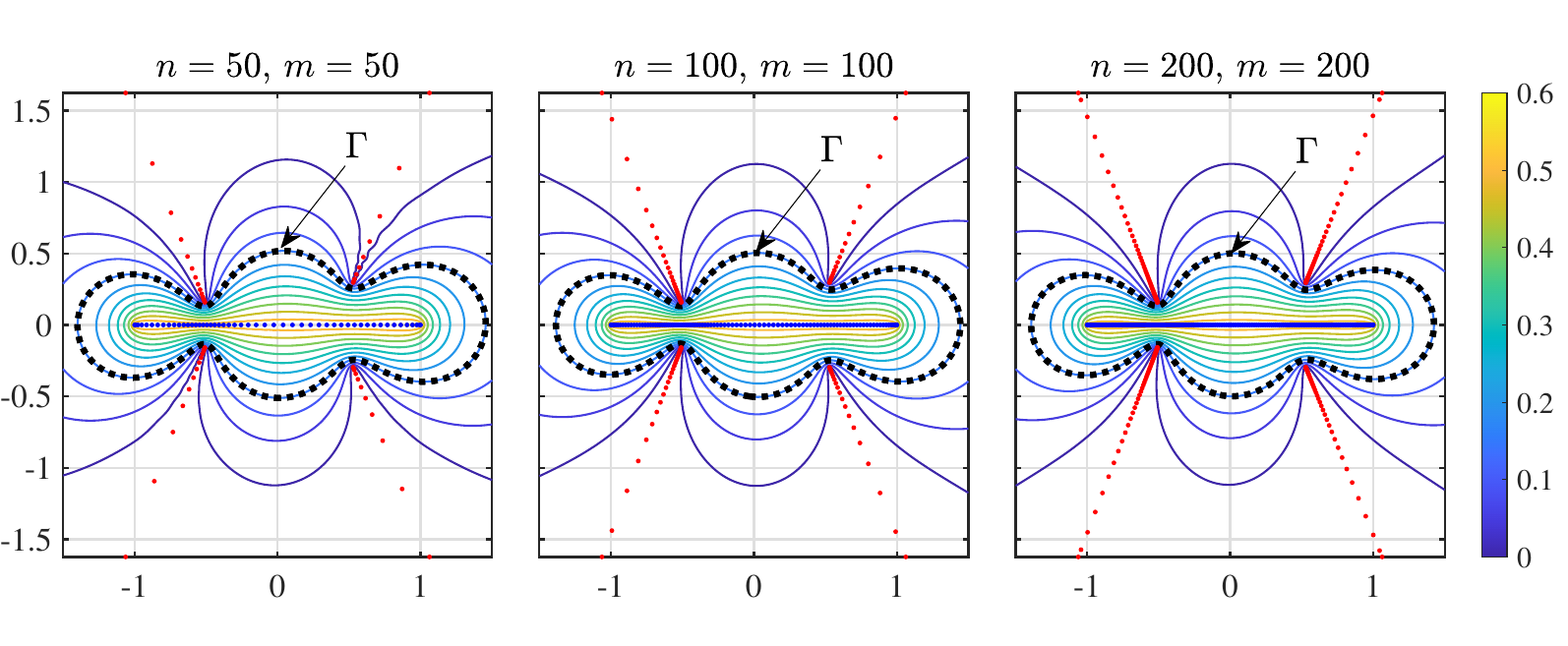}
  \caption{Contours of the potential function $U_n$ generated by $(n+1)$ nodes (blue) and $m$ poles (red). 
  As $n$ grows, the potential function stabilizes in some neighborhood of $[-1,1]$. The computation of these nodes 
  and poles is detailed in \cite{Zhao2023}.}
  \label{fig:6.1}
\end{figure}

%Therefore, for rational interpolation that has avoided the exponential growth of the Lebesgue constant, to 
%determine whether it has exponential convergence to the analytic function it is also necessary to know the 
%potential function on a neighborhood of $[-1,1]$. Next, the F-H rational interpolation will be discussed as an 
%example of the Lebesgue constant and the rate of convergence.

\subsection{Take the Floater-Hormann interpolation as an example}\label{sebsec:6.1}
Barycentric rational interpolation is often characterized by the barycentric weights and nodes \cite{Schneider1986,Floater2007,Hale2009}. 
In these methods, the poles of the rational interpolation are implicit but can be uniquely determined from the 
barycentric weights and nodes. One way to compute the poles using weights and nodes is to translate them into the 
computation of matrix eigenvalues, as detailed in \cite[Chapter 2.3.3]{Klein2012}. We still denote these poles as $\{p_j^{(n)}\}_{j=1}^{m}$, 
which gives us the external field $\phi_n$.

F-H rational interpolation is one of the very effective methods of rational interpolation \cite{Floater2007,Guttel2012,Klein2013}.
There are an F-H rational interpolation that achieves exponential convergence to analytic functions at equidistant nodes \cite{Guttel2012}. 
The barycentric weights of the equidistant nodes are denoted as 
\begin{equation}\label{eq:6.1}
  w_i^{(n,d)}=(-1)^i\sum_{k=d}^{n}\binom{d}{k-i},\quad \frac{d(n)}{n}\to C_{FH}\,(n\to\infty)
\end{equation}
for a fixed $C_{FH}\in(0,1]$.
\cref{fig:6.4} illustrates the contours of the discrete potential for the equidistant nodes at $C_{FH} = 0.25$. 
As $n$ increases, the external field generated by the poles remains stable in a neighborhood on $[-1,1]$. 
Thus, by directly observing the potential function, the method may achieve exponential convergence to the 
analytic function.

\begin{figure}[htbp]
	\centering
	\includegraphics[width=0.95\linewidth]{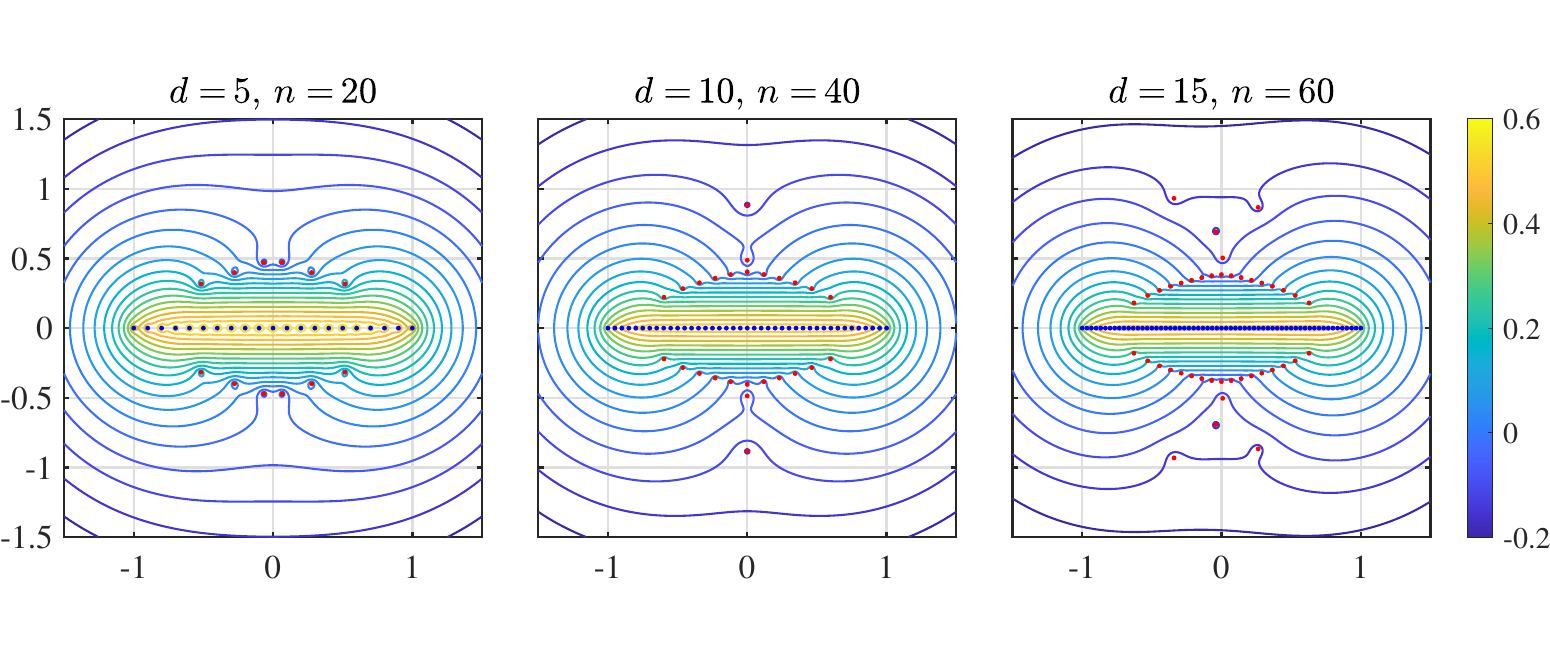}
  \caption{Contours of the potential function $U_n$ generated by the nodes (blue) and poles (red) of the F-H rational interpolation [] when $d/n=0.25$. 
  As $n$ grows, the potential function stabilizes in some neighborhood of $[-1,1]$.}
  \label{fig:6.4}
\end{figure}

However, \cref{fig:6.4} shows that the potential function is not equalized on $[-1,1]$, and the potential is 
lower at the edges of the interval. The reason for this is that the potential generated by the uniform density 
is weakly singular (with an unbounded derivative) at both endpoints. In contrast, the potential generated by 
the poles converges on a neighborhood of $[-1,1]$, so its derivative on $[-1,1]$ is bounded. As a result, 
the potential function will not be a constant on $[-1,1]$. 

According to the conclusion in \cref{sec:4}, the Lebesgue constant has an exponential growth rate of $(\exp\rho)^n$, 
where $\rho$ is the difference between the maximum and minimum values of the potential function on $[-1,1]$. 
The \cref{sec:3} shows that the growth rate of the barycentric weight ratio is likewise $(\exp\rho)^n$. 
Thus, from the barycentric weight Eq. \cref{eq:6.1}, the maximum ratio is $2^d\approx (2^{C_{FH}})^n$, and then the exponential 
growth rate of the Lebesgue constant can be obtained to be $(2^{C_{FH}})^n$. This is in agreement with the conclusions of \cite{Klein2013}.

While for F-H rational interpolation with a given parameter $d$ at equidistant nodes, its Lebesgue constant only grows logarithmically \cite{Bos2012}. 
Numerical tests reveal that the difference of its potential function on $[-1,1]$ tends to $0$. \cref{fig:6.2a} shows 
the potential function $\hat{U}_n(x)$ for F-H rational interpolation on equidistant nodes for $d=4$. As $n$ grows, the potential 
function gets closer to a constant function on $[-1,1]$. For different parameters $d$, \cref{fig:6.2b} shows the 
difference between the maximum and minimum values of the potential function on $[-1,1]$.

\begin{figure}[htbp]
  \centering
  \subfloat[]{
    \label{fig:6.2a}
    \includegraphics[width=0.47\linewidth]{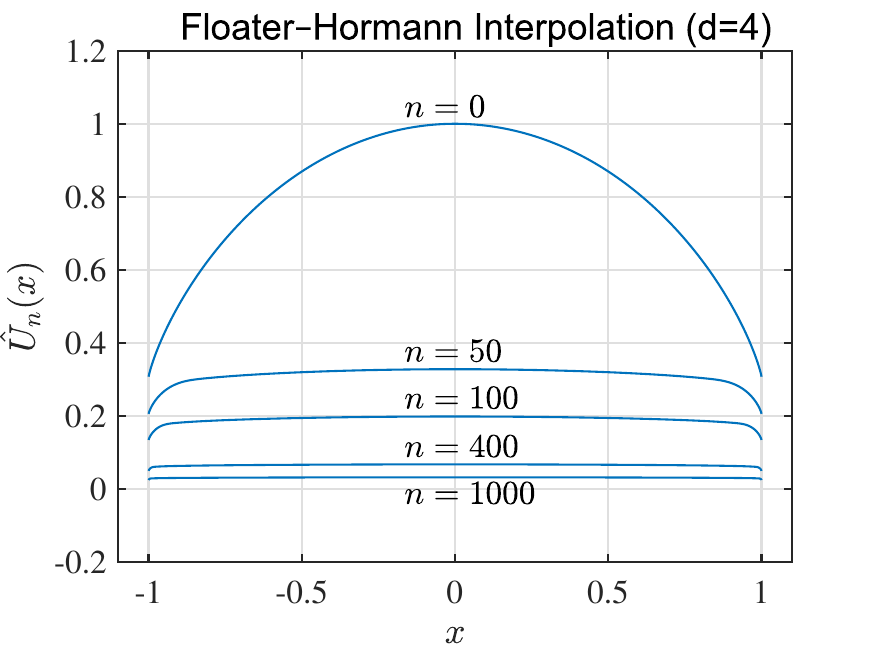}
  }
  \subfloat[]{
    \label{fig:6.2b}
    \includegraphics[width=0.47\linewidth]{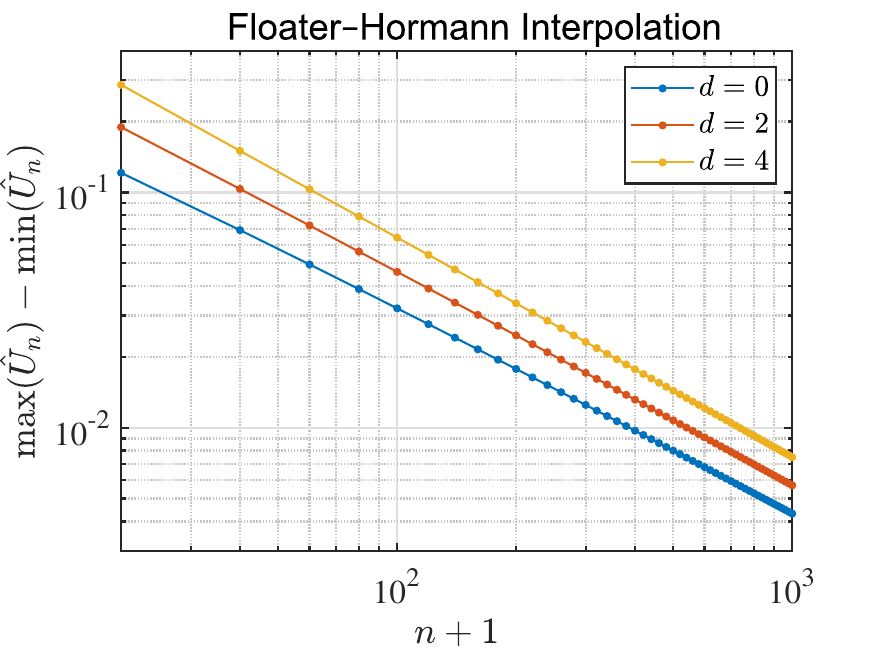}
  }
  \caption{(a): Potential function $\hat{U}_n$ corresponding to different $n$ of F-H rational interpolation when $d=4$. 
  (b): Difference between the maximum and minimum values of potential function $\hat{U}_n$ on $[-1,1]$ for F-H rational interpolation when $d=0,2,4$ respectively.}
  \label{fig:6.2}
\end{figure}

Since the difference of the potential functions converges to $0$, the external field $\phi_n(x)$ converges uniformly 
to $C_n-U_a$ on $[-1,1]$, where $C_n$ is a constant. Due to the weak singularity of $U_a$, it is easy to show that the 
derivative of $\phi_n$ on $[-1,1]$ has no upper bound as $n$ grows. Considering the definition of the external 
field $\phi_n$, it is clear that the poles will grow closer to the real axis as $n$ grows. As shown in \cref{fig:6.5}, 
unlike the case in \cref{fig:6.1}, there is no integral path $\Gamma$ to provide a sufficient difference in potential. 
Thus, the F-H rational interpolation on equidistant nodes has no exponential convergence to analytic functions 
when parameter $d$ is fixed.

\begin{figure}[htbp]
	\centering
	\includegraphics[width=0.95\linewidth]{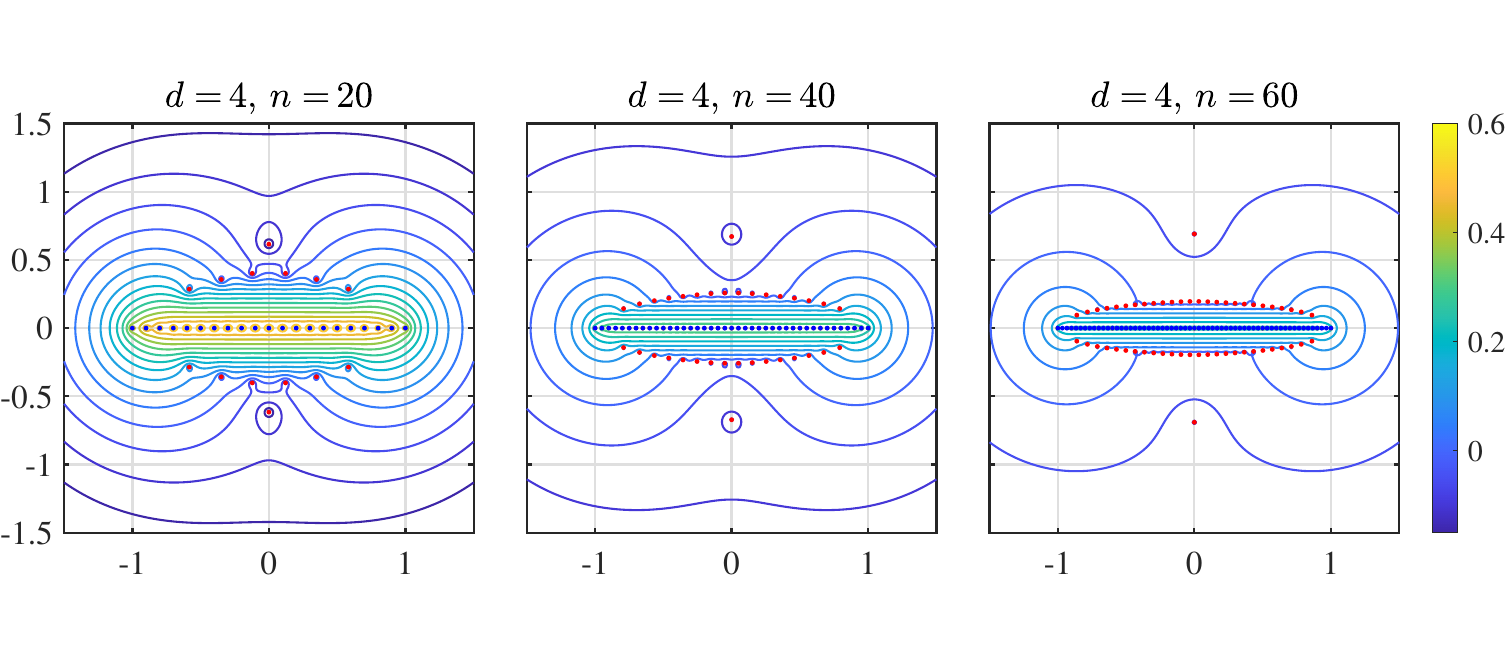}
  \caption{Contours of the potential function $U_n$ generated by the nodes (blue) and poles (red) of the F-H rational interpolation \cite{Floater2007} when $d=4$. 
  As $n$ grows, the poles get closer to the interval $[-1,1]$ and the potential function takes on smaller values on the neighborhood of $[-1,1]$.}
  \label{fig:6.5}
\end{figure}

\subsection{Fast and stable rational interpolation}
\cref{sebsec:6.1} supports the well-established view that rational interpolation on equidistant nodes does not 
exhibit stable exponential convergence to the analytic function. From a potential perspective, the key reason for 
this is that the density function of equidistant nodes produces a logarithmic potential function $U_a$ 
that is weakly singular on $[-1,1]$. This makes it difficult to match the external fields generated by the poles.

Conversely, if a positive density function $w$ produces a potential that is analytic on $[-1,1]$ and can be analytically 
extended to some neighborhood $\Omega$ of $[-1,1]$. If there exists a harmonic function $\phi$ on some neighborhood 
$\Omega'\subseteq\Omega$ such that the potential function is constant on $[-1,1]$ and there exists a family of poles 
$\{p_j^{(n)}\}_{j=1}^m$ (where $m\le n$), the discrete external field $\phi_n$ generated by $\{p_j^{(n)}\}_{j=1}^m$ 
converges consistently to $\phi$ on $\Omega'$. Then the rational interpolation formed by this density function $w$ 
with the poles $\{p_j^{(n)}\}_{j=1}^m$ both avoids the exponential growth of Lebesgue constant and achieves 
exponential convergence for the analytic function.

\section{Conclusions}
\label{sec:conclusions}
In this work, we focus on the connection between the continuous potential function $U$, the barycentric weights, 
and Lebesgue constant for polynomial and rational interpolation on the interval $[-1,1]$. The difference $d$ between 
the maximum and minimum of the potential function induces an exponential growth rate lower bound $[\exp d]^{n}$ for the 
maximum ratio of the absolute values of the barycentric weights, which is also reflected in the Lebesgue constant. 
Moreover, we find that the exponential growth rate of the Lebesgue function at $\hat{x}\in[-1,1]$ can be approximately 
estimated using the potential function. Finally, when the potential function equilibrates on $[-1,1]$, we give a 
non-exponential growth upper bound on its Leberger constant.

It is important to note that the Lebesgue constant is not entirely determined by the continuous potential \(U\). 
For example, Chebyshev polynomial interpolation and Legendre polynomial interpolation, which have the same continuous 
potential function, have different growth rates of Lebesgue constant. In this paper, the potential function \(U\) 
is derived from the density function \(w\) and the external field \(\phi\), while the Lebesgue constant is influenced 
by the interpolation nodes and poles. The bridge connecting these two aspects is the discrete potential \(U_n\), defined 
by the nodes and poles. The manner in which the discrete potential converges to the continuous potential also impacts 
the growth of the Lebesgue constant. To characterize the convergence of the discrete potential to the continuous 
potential, we introduce the parameters \(\delta_n^{\pm }\).
\begin{figure}[htbp]
	\centering
	\includegraphics[width=0.95\linewidth]{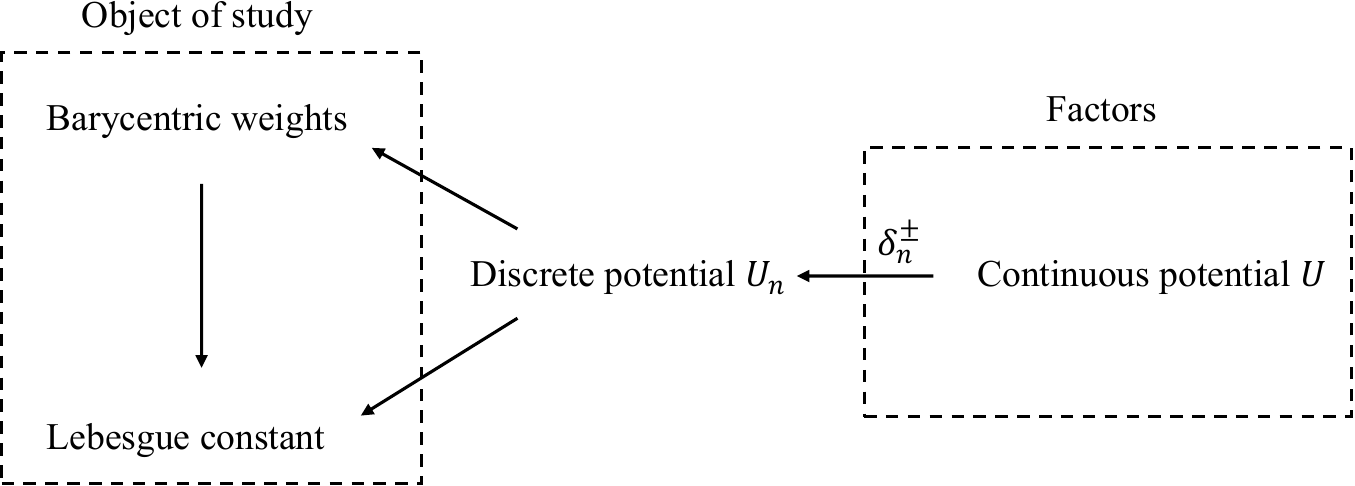}
  \caption{The main research path of this paper.}
  \label{fig:7}
\end{figure}

Since the focus of this paper is on the connection between the continuous potential and the Lebesgue constant, 
we deliberately ignore the discussion on \(\delta_n^{-}\) and \(\delta_n^{+}\). In the numerical examples, all node 
distributions satisfy $\int_{-1}^{x_i^{(n)}}w(t)\,\mathrm{d}t=i/n$, and all employ a fixed \(\phi_n\). The purpose of these special arrangements is 
to minimize the effect on Lebesgue constant of the process by which the discrete potential converges to the continuous 
potential. In our tests, these special arrangements make \(\delta_n^{-} = \mathcal{O}(n^{-1})\) and 
\(\delta_n^{+} = \mathcal{O}(n^{-1}\log n)\). Thus, the numerical correlation between the continuous potential and 
Lebesgue constant becomes clearer.

Although we ignore \(\delta_n^{\pm}\) in this paper, it may be the key to explaining some phenomena. 
Examples include different Lebesgue constants for various Jacobi polynomial interpolations. We believe that discussing 
the connection between specific node distributions and \(\delta_n^{\pm}\) is also an interesting issue that 
may provide new approaches to the study of Lebesgue constants for a specific polynomial or rational interpolation.

%\appendix
%\section{An example appendix} 
%\lipsum[71]

%\section*{Acknowledgments}
%We would like to acknowledge the assistance of volunteers in putting
%together this example manuscript and supplement.

\bibliographystyle{siamplain}
\bibliography{reference}
\end{document}